\newcommand{\eps}{\varepsilon}
\newcommand{\scen}[1]{\mbox{\scriptsize{\rm{#1}}}}
\theoremstyle{thmstyleone}%
\newtheorem{theorem}{Theorem}
\newtheorem{proposition}[theorem]{Proposition}%
\newtheorem{corollary}[theorem]{Corollary}
\newtheorem{conjecture}[theorem]{Conjecture}
\theoremstyle{thmstyletwo}%
\newtheorem{example}[theorem]{Example}%
\theoremstyle{thmstylethree}%
\newcommand{\ph}{\phantom}
\newcommand{\smtxa}[2]{
{\mbox{\scriptsize
$\left[\!\! \begin{array}{#1} #2 \end{array} \!\! \right]$}}}
\newcommand{\bu}{\mathbf u}
\newcommand{\bv}{\mathbf v}
\newcommand{\bx}{\mathbf x}
\newcommand{\bxbar}{\overline\bx}
\newcommand{\bA}{\mathbf A}
\newcommand{\bB}{\mathbf B}
\newcommand{\bC}{\mathbf C}
\newcommand{\bI}{\mathbf I}
\newcommand{\bM}{\mathbf M}
\newcommand{\bS}{\mathbf S}
\newcommand{\bV}{\mathbf V}
\newcommand{\bZ}{\mathbf Z}
\newcommand{\bVfda}{\mathbf V_{\rm FDA}}
\newcommand{\bVtr}{\mathbf V_{\rm TR}}
\newcommand{\bVtrT}{\mathbf V_{\rm TR}^T}
\newcommand{\wtbVfda}{\widetilde{\mathbf V}_{\rm FDA}}
\newcommand{\wtbVtr}{\widetilde{\mathbf V}_{\rm TR}}
\newcommand{\bQ}{\mathbf Q}
\newcommand{\bW}{\mathbf W}
\newcommand{\bX}{\mathbf X}
\newcommand{\Deltabar}{\overline\Delta}
\newcommand{\bdelta}{\boldsymbol \delta}
\newcommand{\bmu}{{\boldsymbol \mu}} 
\newcommand{\bSigma}{\mathbf \Sigma}
\newcommand{\zero}{\mathbf 0}
\newcommand{\caln}{\mathcal N}
\newcommand{\R}{\mathbb R}
\newcommand{\wh}{\widehat}
\newcommand{\wt}{\widetilde}
\newcommand{\diag}{{\rm diag}}
\newcommand{\rank}{{\rm rank}}
\newcommand{\spooled}{{\mathbf S}_{{\rm pooled}}}
\newcommand{\cont}[1]{{}^c{#1}}
\newcommand{\tr}{{\rm tr}}
\DeclareMathOperator*{\argmin}{\arg\!\min}
\DeclareMathOperator*{\argmax}{\arg\!\max}
\DeclareMathOperator*{\var}{{\rm Var}}
\DeclareMathOperator*{\expec}{\mathbb{E}}
\begin{document}

\title{On the trace ratio method and Fisher's discriminant analysis for robust multigroup classification}

\author*[1]{\fnm{Giulia} \sur{Ferrandi}}\email{g.ferrandi@tue.nl}

\author[2]{\fnm{Igor V.} \sur{Kravchenko}}\email{igor.kravchenko@tecnico.ulisboa.pt}

\author[1]{\fnm{Michiel E.} \sur{Hochstenbach}}\email{m.e.hochstenbach@tue.nl}

\author[2]{\fnm{M.~Ros\'ario} \sur{Oliveira}}\email{rosario.oliveira@tecnico.ulisboa.pt}

\affil[1]{\orgdiv{Department of Mathematics and Computer Science}, \orgname{TU Eindhoven}, \orgaddress{\street{PO Box 513}, \city{Eindhoven}, \postcode{5600MB}, \country{The Netherlands}}}

\affil[2]{\orgdiv{CEMAT and Department of Mathematics}, \orgname{Instituto Superior T\'ecnico, Universidade de Lisboa}, \orgaddress{\street{Av.~Rovisco Pais}, \city{Lisbon}, \postcode{1049-001}, \country{Portugal}}}





\abstract{
We compare two different linear dimensionality reduction strategies for the multigroup classification problem: the trace ratio method and Fisher's discriminant analysis. Recently, trace ratio optimization has gained in popularity due to its computational efficiency, as well as the occasionally better classification results. However, a statistical understanding is still incomplete. We study and compare the properties of the two methods. Then, we propose a robust version of the trace ratio method, to handle the presence of outliers in the data. We reinterpret an asymptotic perturbation bound for the solution to the trace ratio, in a contamination setting. Finally, we compare the performance of the trace ratio method and Fisher's discriminant analysis on both synthetic and real datasets, using classical and robust estimators.
}

\keywords{
Trace ratio method, Fisher's discriminant analysis, multigroup classification, linear dimensionality reduction, regularized MCD, robust statistics.
}

\pacs[MSC Classification]{62H30, 15A18}

\maketitle



\section{Introduction}

Classification is an important statistical task where an observation is assigned to one of the non-overlapping known groups, based on the statistical properties of the data characterizing these groups. Statistics, machine learning, data science, and pattern recognition are some of the areas that use this family of methods to solve practical problems.
In this paper, we take a closer look at two linear methods for multigroup classification: the trace ratio method and Fisher's discriminant analysis.
By both theoretical analysis and numerical simulations, we attempt to increase our understanding of the properties of these methods.
We also introduce and study a robust version of the trace ratio method.

Suppose we have a population described by the random vector $\bX= (X_1, \dots,X_p)^T$, with mean $\expec(\bX) = \bmu$ and covariance $\var(\bX) = \bSigma$. We make the common assumption that $\bSigma$ is nonsingular.
The population is divided into $g$ groups (or classes), modeled by the class variable $Y \in \{1, \dots,g\}$. Denote the expectation of $\bX_j = (\bX\mid Y = j)$ by $\expec(\bX_j) = \bmu_j$, and its covariance matrix by $\var(\bX_j) = \bSigma_j$. The probability of being in group $j$ is $p_j = P(Y=j)$. Then
 \begin{equation}\label{eq:E(X)}
\bmu=\expec(\bX) = \expec(\expec(\bX \mid Y)) = \sum_{j=1}^g p_j \, \bmu_j.
\end{equation}
Given a new observation $\bx \in \mathbb{R}^{p}$, we would like to assign it to one of the $g$ classes, via a so-called \emph{classification rule}, that is built on top of the available data. We are interested in methods where the classification task includes a \emph{linear} dimensionality reduction step, where the data are projected onto a lower-dimensional subspace.

In particular, we study the well-known Fisher's discriminant analysis (FDA; see, e.g., \cite[Ch.~11]{johnson2007applied}) and a more recent dimensionality reduction method, the trace ratio method (TR; see, e.g., \cite{ngo2012trace}). Both strategies aim at finding a lower-dimensional representation of the data where the separation between the groups is maximized in a certain sense. They both involve a measure of distance between and within the groups. This is inspired by the decomposition of the covariance matrix $\var(\bX)$ as
\begin{equation}\label{eq:Var(X)}
\var(\bX) = \expec(\var(\bX\mid Y)) + \var(\expec(\bX\mid Y))
= \sum_{j=1}^g \, p_j\,\bSigma_j + \sum_{j=1}^g \, p_j \, (\bmu_j-\bmu) \, (\bmu_j-\bmu)^T,
\end{equation}
where $\bW=\sum_{j=1}^g \, p_j\,\bSigma_j$ is the \emph{within covariance} matrix and $\bB=\sum_{j=1}^g \, p_j \, (\bmu_j-\bmu) \, (\bmu_j-\bmu)^T$ the \emph{between covariance} matrix. 
If projecting the data leads to a good separation among the groups, a favorable effect on the performance of the classifiers is expected, since the present noise may be damped or filtered.
In addition, the dimension reduction step tends to increase the efficiency of subsequent steps and may help the analyst to interpret the classification output.

The within and between covariance matrices can be estimated in different ways. We start our discussion from the classical estimates in Section~\ref{sec:ldatr}, reviewing FDA and the trace ratio method. Next, we are interested in assessing the classification performances of the two methods when robust estimates are used. These are very useful when the data is contaminated, i.e., there is a certain proportion $\eps > 0$ of outliers that do not follow the distribution of $\bX$. As we will observe in Section~\ref{sec:simulexp}, while some nonlinear classifiers seem to be less affected by outliers, linear classifiers may benefit from the use of robust statistics. 

FDA with robust estimates has already been studied mainly by exploring the connection between FDA and linear discriminant analysis (LDA); see, e.g., \cite{Todorov.Pires:2007}. 
Several methods to robustify discriminant analysis have been proposed. The most straightforward proposals suggest employing robust estimators of group means and robust covariance matrices \cite{Todorov.Pires:2007,Hawkins.McLachlan:1997,Croux.Dehon:2001, Hubert.VanDriessen:2004,Bashir.Carter:2005}. 
These methods assume that the number of observations per group, compared to the number of variables, is sufficiently large to guarantee good estimates of the unknown parameters. When we are in the opposite situation (which happens in many practical problems), we say that our data is high-dimensional.
The first attempts to deal with this issue were done by Vanden Branden and Hubert
\cite{VandenBranden.Hubert:2005}, who provide a robust classifier for high-dimensional data based on SIMCA \citep{Wold:1976}, and by Pires and Branco \cite{Pires.Branco:2010} who propose robust classifiers based on projection-pursuit. Later on, Hoffmann and coauthors \cite{Hoffmann.et.al:2016} suggest a sparse and robust classification method based on partial least squares for binary classification problems, while \cite{ortner2020robust} presents a robust version of optimal scoring, for both the non-sparse and sparse case.

In this paper, we propose a new robust TR method, obtained by exploiting MCD robust estimates of the within and between covariance matrices. The method can deal with high-dimensional data since it uses regularized MCD estimates (MRCD, \cite{boudt2020mrcd}); however, as we will see in Section~\ref{SubSec:Discussion}, when a high number of irrelevant variables has a negative impact on the performance of the estimation methods, even on the robust ones. 

We derive theoretical expressions showing that the between and within theoretical covariance matrices, under a certain contamination scenario (cf.~Section~\ref{subsec:constscatter}), can be written as the sum of their non-contaminated counterparts and a perturbation term. In the TR case, this formulation allows us to derive an upper bound for the distance between the non-contaminated and the contaminated solutions of the method. 

We compare TR and FDA on synthetic and real datasets. Synthetic scenarios consider cases where one method performs better than the others. In this case, FDA and TR are used as classifiers and are compared by means of two different criteria. While the first one is based on the performance of the associated classification rules, the second criterion is related to the proximity between the true solution of one method and the estimated one. Real datasets have been chosen from the UCI \cite{ucirepo} and KEEL \cite{keelrepo} platforms, and they illustrate the performance of FDA and TR as dimensionality reduction methods, used before the construction of a classifier. In many of these datasets, FDA is as good as or better than TR. Moreover, robust TR shows clear improvements compared to classical TR in several datasets.

This paper is organized as follows. In Section~\ref{sec:ldatr} we review FDA, LDA, and the trace ratio method.
In Section~\ref{sec:Contam effect on W and B} we derive the theoretical within and between covariance matrices under a specific contamination scenario. 
For TR we derive an upper bound quantifying the effect of an infinitesimal percentage of contamination into the TR matrix of coefficients. 
In Section~\ref{sec:rob} we propose a robust TR estimator. In Section~\ref{sec:simulation} we carry out several simulation experiments and in Section \ref{sec:realdata} we explore FDA and TR as dimensionality reduction strategies for real datasets. The main conclusions are presented in Section \ref{sec:conclusions}.

We conclude this section by introducing some notations. We denote the $k \times k$ identity matrix by $\bI_k$.
While $\|\cdot\|$ denotes the standard Euclidean norm for vectors and the induced spectral norm for matrices (equal to its largest singular value), $\|\bx\|_{\bA}^2 = \bx^T\!\bA\bx$ is used for the $\bA$-weighted norm for a symmetric positive definite (SPD) matrix $\bA$.
Given the orthogonal bases of two subspaces (of possibly unequal dimension) $\bV$ and $\wh\bV$, the (largest) angle between the two subspaces is defined as
$\angle (\bV, \wh\bV) = \arcsin \|\bV\bV^T-\wh\bV\wh\bV^T\|$.
This quantity is well defined since $\|\bV\bV^T-\wh\bV\wh\bV^T\| \in [0,\,1]$ (see, e.g., \cite{knyazev2002principal}).

\section{FDA, LDA, and TR for multigroup classification}
\label{sec:ldatr}
We present and compare FDA \cite{fisher1936use} with the more recent trace ratio optimization method (see, e.g., \cite{ngo2012trace} for a review). 
We discuss the two methods for the case where classical estimates for the covariance matrices $\bB$ and $\bW$ are used.

Let $\bx_{j1}, \dots, \bx_{jn_j}$ represent $n_j$ realizations of a random sample from $\bX_j$; the total sample size is $n = \sum_{j=1}^g n_j $. The sample mean and the sample covariance matrix of group $j$ are $\bxbar_j = n_j^{-1} \, \sum_{h=1}^{n_j}{\bx_{jh}}$ and $\bS_j = (n_j-1)^{-1} \, \sum_{h=1}^{n_j} \, (\bx_{jh}-\bxbar_j) \, (\bx_{jh}-\bxbar_j)^T$, respectively.
The overall sample mean and covariance are then estimated by
$\bxbar = \sum_{j=1}^g \tfrac{n_j}{n} \, \bxbar_j$ and $\spooled = \sum_{j=1}^g \tfrac{n_j-1}{n-g} \, \bS_j$. 
We define the classical between-class scatter matrix, $\bB$, and the classical within-class scatter matrix, $\bW$, as follows:
\begin{equation} \label{eq:SbSw}
\bB = n^{-1} \, \sum_{j=1}^g n_j \, (\bxbar_j-\bxbar) \, (\bxbar_j-\bxbar)^T, \qquad
\bW = n^{-1} \, \sum_{j=1}^g \sum_{h=1}^{n_j} \ (\bx_{jh}-\bxbar_j) \, (\bx_{jh}-\bxbar_j)^T.
\end{equation}
To keep the notation simple, we use $\bB$ ($\bW)$ for both the theoretical and classical (i.e., non-robust) estimated between (within) covariance matrices. It is easy to check that $\bW = n^{-1} \, (n-g)\, \spooled$. 
Let $\rank(\cdot)$ indicate the rank of a matrix. Then $r = \rank(\bB) \le \min\{g-1, \, p\}$ and $\rank(\bW)\le \min\{n-g, \, p\}$. Throughout the paper, we assume the common situation that there are sufficiently many data points such that $n-g \ge p$. Since $\bSigma$ is assumed to be nonsingular, we also assume that $\bW$ is also nonsingular, and thus SPD. However, if some variables are highly correlated, numerical singularity of $\bW$ may occur. One possible way to handle this situation is described in Section~\ref{sec:realdata}.

\subsection{Fisher's discriminant analysis}\label{SubSec:FDA}
The original idea of Fisher \cite{fisher1936use} is to find the linear combination of the data, $\bv^T\bx$, that leads to the highest separability between the groups. By assuming that the population group covariance matrices are equal and of full rank, we look for a projection that maximizes the separation between the groups and minimizes the variability within the groups. This corresponds to finding a projection vector, $\bv_{\rm FDA}$, that maximizes a separability index: $\bv_{\rm FDA} = \argmax_{\bv \ne \zero}\rho_{\rm FDA}(\bv)$, where
\begin{equation} \label{eq:FDA.MaxProblem}
\rho_{\rm FDA}(\bv) = \frac{\sum_{j=1}^g n_j \, (\bv^T\bxbar_j-\bv^T\bxbar)^2}{\sum_{j=1}^g \sum_{h=1}^{n_j} (\bv^T\bx_{jh}-\bv^T\bxbar_j)^2} = \frac{\bv^T\bB\bv}{\bv^T\bW \bv}.
\end{equation}
This quantity is a {\em Rayleigh quotient} of the pair $(\bB, \bW)$.
Suppose we want to project our data onto a $k$-dimensional subspace spanned by the columns of $\bV\in\R^{p\times k}$, with $k \le r$.
FDA \eqref{eq:FDA.MaxProblem} can be formalized as a generalized eigenvalue problem; see, e.g., \cite[p.~623]{johnson2007applied}.

\begin{proposition}\label{prop:gep}
Let $\lambda_1 \ge \dots \ge \lambda_k>0$ be the $k\le r$ largest nonzero eigenvalues of the generalized eigenvalue problem $\bB\bv = \lambda\, \bW \bv$ and let $\bv_1, \dots, \bv_k$ be the associated eigenvectors (usually scaled such that $\bv_i^T\spooled\bv_i=1$). Then the vector of coefficients $\bv_{\rm FDA}$ that maximizes \eqref{eq:FDA.MaxProblem} is given by $\bv_{\rm FDA} = \bv_1$. The $i$th eigenvector is the solution to
\begin{equation}
\label{eq:gep1}
\bv_{i} = \argmax_{\substack{\bv_j^T\spooled \bv = 0,\ j < i \\ \bv \ne \zero}} \ \frac{\bv^T\bB\bv}{\bv^T\bW \bv}.
\end{equation}
\end{proposition}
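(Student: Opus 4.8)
The plan is to reduce the generalized eigenvalue problem to a standard symmetric one by a congruence transformation, after which both claims follow from the classical Rayleigh--Ritz and Courant--Fischer theory. Since $\bW$ is assumed SPD, it admits a symmetric positive definite square root $\bW^{1/2}$. First I would substitute $\bu = \bW^{1/2}\bv$, so that $\bv = \bW^{-1/2}\bu$ and the Rayleigh quotient in \eqref{eq:FDA.MaxProblem} becomes
\begin{equation*}
\rho_{\rm FDA}(\bv) = \frac{\bv^T\bB\bv}{\bv^T\bW\bv} = \frac{\bu^T\bC\bu}{\bu^T\bu}, \qquad \bC := \bW^{-1/2}\bB\,\bW^{-1/2}.
\end{equation*}
The matrix $\bC$ is symmetric and positive semidefinite (because $\bB$ is), and one checks directly that $\bB\bv = \lambda\,\bW\bv$ is equivalent to $\bC\bu = \lambda\bu$. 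Thus the generalized eigenpairs $(\lambda_i, \bv_i)$ correspond exactly to the eigenpairs $(\lambda_i, \bu_i)$ of $\bC$ with $\bu_i = \bW^{1/2}\bv_i$.

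For the first claim, since $\bC$ is symmetric it possesses an orthonormal eigenbasis, and the classical Rayleigh--Ritz theorem gives that $\max_{\bu\ne\zero}\, \bu^T\bC\bu/(\bu^T\bu)$ equals the largest eigenvalue $\lambda_1$, attained at $\bu_1$. Transforming back yields $\bv_{\rm FDA} = \bW^{-1/2}\bu_1 = \bv_1$, as required.

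For the second claim (the deflation characterization), I would observe that the orthonormality $\bu_i^T\bu_j = \delta_{ij}$ translates into $\bv_i^T\bW\bv_j = \delta_{ij}$, i.e.\ the generalized eigenvectors are $\bW$-orthogonal. Since $\bW = n^{-1}(n-g)\,\spooled$ is a positive multiple of $\spooled$, this is the same as $\spooled$-orthogonality, so the stated constraint $\bv_j^T\spooled\bv = 0$ is exactly $\bu_j^T\bu = 0$. The constrained problem \eqref{eq:gep1} therefore becomes: maximize $\bu^T\bC\bu/(\bu^T\bu)$ over $\bu$ orthogonal to $\bu_1,\dots,\bu_{i-1}$. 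By the standard Courant--Fischer deflation argument --- expand $\bu$ in the eigenbasis of $\bC$, drop the first $i-1$ components killed by the constraints, and bound the resulting quotient by $\lambda_i$ --- the maximum equals $\lambda_i$ and is attained at $\bu_i$, equivalently at $\bv_i$.

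The argument is essentially bookkeeping once the congruence transformation is in place, so I do not expect a genuine obstacle; the only point requiring care is the normalization. The hard part will be tracking the scalar relation $\bW = n^{-1}(n-g)\,\spooled$ so that the $\spooled$-orthogonality constraints in \eqref{eq:gep1} coincide with the $\bW$-orthogonality produced by the substitution, and noting that since $\rank(\bB) = r$ the matrix $\bC$ has exactly $r$ positive eigenvalues, which is precisely why the statement is restricted to $k \le r$.
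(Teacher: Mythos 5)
Your argument is correct: the congruence transformation $\bu=\bW^{1/2}\bv$, followed by Rayleigh--Ritz and the Courant--Fischer deflation, together with the observation that $\bW$ is a positive scalar multiple of $\spooled$ (so the orthogonality constraints match), establishes both claims. The paper itself offers no proof of this proposition --- it simply cites the standard textbook treatment (Johnson and Wichern) --- and your reduction to a symmetric eigenvalue problem is precisely the classical argument that reference uses, so there is nothing to reconcile.
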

The linear combination $\bv_1^T\bx$ is called \emph{sample first discriminant}, and likewise the linear combination $\bv_i^T\bx$, for $i \le k$, is the sample $i$th discriminant; Fisher's discriminants generate a lower-dimensional space whose span optimizes a sequence of separation criteria $\rho_{\rm FDA}(\cdot)$ defined in \eqref{eq:FDA.MaxProblem}.

Problem \eqref{eq:gep1} is equivalent to finding the first $k$ generalized eigenvectors $\bVfda = [\bv_1, \dots, \bv_k]$ of the pencil $(\bB, \bW)$, with scaling $\bVfda^T\spooled\bVfda = \bI_k$. Since $\bW$ and $\spooled$ are the same matrix up to a multiplicative factor, the generalized eigenvalue problem for FDA is equivalent to finding the solution $\bVfda$ to the constrained optimization problem
(cf., e.g., \cite{ngo2012trace})
\begin{equation} \label{eq:gep}
\max_{\bV^T\bW \, \bV = \bI_k} \tr(\bV^T\bB\bV),
\end{equation}
where ${\rm tr}(\cdot)$ indicates the trace of a matrix. Another equivalent formulation is (cf.~\cite[Prop.~2.1.1]{absil2009optimization}) 
$\max_{\text{\scriptsize rank}(\bV) = k} {\rm tr}((\bV^T\bW \, \bV)^{-1} \, \bV^T\bB\bV)$,
where the objective function may be viewed as a \emph{generalized Rayleigh quotient}.

\subsection{Relation between linear and Fisher's discriminant analysis}\label{sec:fda-lda}

When we assume that all the group covariance matrices are equal, i.e., $\bSigma_j \equiv \bSigma$, FDA has a strong connection with LDA, which assumes that, within groups, the random variables have a multivariate normal distribution with equal covariance matrices. The associated classification rule is derived such that it minimizes the \emph{total probability of misclassification}, which is the probability of assigning a given data point to an incorrect class (cf.~\cite[Result~11.5]{johnson2007applied}). According to LDA, an observation $\bx$ is assigned to class $j$ if (see, e.g., \cite[p.~630]{johnson2007applied})
\begin{equation} \label{eq:lda}
j = \argmin_i \|\bx-\bmu_i\|_{\bSigma^{-1}}^2-2\, \log p_i,
\end{equation}
and $\|\bx-\bmu_i\|_{\bSigma^{-1}}$ is the Mahalanobis distance between $\bx$ and the group mean $\bmu_i$.
LDA is also the classifier that maximizes the total probability of correctly assigning an observation (called the accuracy) among all possible classifiers for multivariate normal mixtures with equal covariance matrices. 


When $k = r$, LDA with the classical estimates $\bxbar_i$ and $\spooled$ coincides with projecting a data point $\bx$ onto $\bVfda \in \mathbb R^{p\times r}$ and classifying it to the group with the closest projected mean, according to the Euclidean distance.
This result is reported in Proposition \ref{prop:eulda}, which is a slightly different formulation of \cite[Result~11.6]{johnson2007applied}. Recall that $\bB$ has $r\le \min{\{g-1,p\}}$ positive eigenvalues.

\begin{proposition}
\label{prop:eulda}
Let $[\bVfda\; \bV_\perp]$ contain the $\spooled$-orthogonal generalized eigenvectors of the pencil $(\bB,\bW)$, with $\bVfda = [\bv_1,\dots,\bv_r]$ and $\bV_\perp^T\spooled\bVfda = \zero$. Then only the terms $\|\bVfda^T(\bx - \bxbar_i)\|^2$ contribute to the classification. In mathematical terms,
\[
\argmin_i \|\bx-\bxbar_i\|_{\spooled^{-1}}^2-2\, \log \tfrac{n_i}{n} = \argmin_i \|\bVfda^T (\bx-\bxbar_i)\|^2-2\, \log \tfrac{n_i}{n}.
\]
\end{proposition}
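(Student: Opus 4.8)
The plan is to exploit the fact that the full set of generalized eigenvectors $\bU = [\bVfda\;\bV_\perp]$ forms a complete $\spooled$-orthonormal basis of $\R^p$, and that $\bV_\perp$ spans the generalized null space of $\bB$. First I would note that, because $\bW$ (and hence $\spooled$) is SPD, the symmetric pencil $(\bB,\bW)$ is simultaneously diagonalizable by a congruence, so $\bU$ is a genuine $p\times p$ invertible matrix. With the stated scaling $\bv_i^T\spooled\bv_i = 1$ this yields $\bU^T\spooled\bU = \bI_p$, and inverting this identity gives the key resolution $\spooled^{-1} = \bU\bU^T = \bVfda\bVfda^T + \bV_\perp\bV_\perp^T$.

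Next I would substitute this decomposition into the squared Mahalanobis distance. Using the definition $\|\cdot\|_{\spooled^{-1}}^2$ and the splitting above, one obtains
\[
\|\bx - \bxbar_i\|_{\spooled^{-1}}^2 = \|\bVfda^T(\bx - \bxbar_i)\|^2 + \|\bV_\perp^T(\bx - \bxbar_i)\|^2,
\]
so the distance separates into an FDA component and a complementary component. It then remains to show that the complementary component does not depend on the class index $i$.

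The crux of the argument is to establish that $\bV_\perp^T\bxbar_i$ is independent of $i$. Since $\bB$ has rank $r$ and $\bW$ is nonsingular, the $p-r$ columns of $\bV_\perp$ are precisely the generalized eigenvectors associated with the zero eigenvalue, whence $\bB\bV_\perp = \zero$ and in particular $\bV_\perp^T\bB\bV_\perp = \zero$. Writing $\bB = n^{-1}\sum_{j=1}^g n_j\,(\bxbar_j-\bxbar)(\bxbar_j-\bxbar)^T$ as a sum of rank-one PSD terms, the vanishing of $\bV_\perp^T\bB\bV_\perp$ forces each summand to vanish, i.e.\ $\bV_\perp^T(\bxbar_j-\bxbar) = \zero$, so $\bV_\perp^T\bxbar_j = \bV_\perp^T\bxbar$ for every $j$. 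Consequently $\|\bV_\perp^T(\bx-\bxbar_i)\|^2 = \|\bV_\perp^T\bx - \bV_\perp^T\bxbar\|^2$ is the same for all $i$, and being a constant offset it can be dropped from the $\argmin$ (together with the common term, it leaves the penalty $-2\log\tfrac{n_i}{n}$ untouched), yielding exactly the claimed identity.

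The individual steps are routine linear algebra; the one place to be careful is the completeness and scaling of the eigenbasis, namely verifying that $[\bVfda\;\bV_\perp]$ really is a full invertible $p\times p$ matrix with $\bU^T\spooled\bU = \bI_p$, which rests on the nonsingularity of $\spooled$ and the simultaneous diagonalizability of $(\bB,\bW)$. Once that resolution of $\spooled^{-1}$ is in hand, the decomposition of the Mahalanobis distance and the null-space argument for $\bV_\perp$ proceed directly.
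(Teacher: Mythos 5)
Your proposal is correct and follows essentially the same route as the paper's proof: the resolution $\spooled^{-1} = \bVfda\bVfda^T + \bV_\perp\bV_\perp^T$, the vanishing of $\bV_\perp^T(\bxbar_i-\bxbar)$ deduced from $\bV_\perp^T\bB\bV_\perp = \zero$ and the rank-one PSD structure of $\bB$, and the resulting class-independence of the complementary term. The only difference is that you spell out the completeness and $\spooled$-orthonormality of the full eigenbasis, which the paper takes for granted.
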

\begin{proof}
Since $\rank(\bB) = r$, we have $\bB\bV_\perp = \zero$. In particular, $\bV_\perp^T(\bxbar_i - \bxbar) = \bf 0$ for all $i$. This can be deduced from the fact that $0 = \text{tr}(\bV^T_\perp\bB\bV_\perp) = \sum_{i=1}^g \frac{n_i}{n} \, \|\bV^T_\perp(\bxbar_i-\bxbar)\|^2$.
Since $\spooled^{-1} = \bVfda\bVfda^T + \bV_\perp\bV_\perp^T$, we have
\begin{align*}
\|\bx-\bxbar_i\|_{\spooled^{-1}}^2 & = \|\bVfda^T (\bx-\bxbar_i)\|^2 + \|\bV_\perp^T \,[\, (\bx-\bxbar) + (\bxbar-\bxbar_i)\,]\,\|^2\\
& = \|\bVfda^T (\bx-\bxbar_i)\|^2 + \|\bV_\perp^T(\bx-\bxbar)\|^2,
\end{align*}
where the second term in the right-hand side is common to all groups.
\end{proof}

From this last proposition, it is easy to see that Rule \eqref{eq:lda} is equivalent to classifying a data point $\bx$ to the class with the closest projected mean $\bVfda^T \bxbar_j$, up to an additive weight related to the class proportions.

For a reduced dimension $k < r$,
there is still a connection between Rule \eqref{eq:lda} and the classification via the Euclidean distance. This requires an estimation of the group means and the common covariance matrix by assuming that the group means lie in a $k$-dimensional space. The result is the following.

\begin{proposition} \cite[Prop.~1]{hastie1996reducedLDA} \label{prop:eulowrank}
Consider a family of $g$ multivariate normal distributions, with mean vectors $\bmu_i$ and common covariance matrix $\bSigma$. Let $\bVfda$ be the $p\times k$ solution to the generalized eigenvalue problem $(\bB, \bW)$ \eqref{eq:gep}, scaled such that $\bVfda^T\spooled\bVfda = \bI_k$. Let $\wh \bmu_i$ and $\wh \bSigma$ be the maximum likelihood estimators of $\bmu_i$ and $\bSigma$, with the additional constraint that the centroids $\bmu_i$ lie in a $k$-dimensional space. Then
$\wh \bmu_i = \spooled\bVfda\bVfda^T\,\bxbar_i + (\bI_p - \spooled\bVfda\bVfda^T) \, \bxbar$ and
$\wh \bSigma = \spooled + (\bI_p - \spooled\bVfda\bVfda^T) \, \bB \, (\bI_p - \spooled\bVfda\bVfda^T)^T.$
Moreover,
\[
\|\bx-\wh\bmu_i\|_{\wh\bSigma^{-1}}^2 = \|\bVfda^T \bx-\bVfda^T\bxbar_i\|^2 + (\bx - \bxbar)^T\,(\wh\bSigma^{-1} - \bVfda\bVfda^T)\,(\bx - \bxbar),
\]
where the rightmost term is common to all groups.
\end{proposition}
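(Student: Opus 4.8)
The plan is to treat the stated closed forms for $\wh\bmu_i$ and $\wh\bSigma$ as the constrained maximum-likelihood estimators established in \cite{hastie1996reducedLDA}, and to organize everything around the matrix $\mathbf{P} := \spooled\bVfda\bVfda^T$. First I would record the elementary consequences of the normalization $\bVfda^T\spooled\bVfda = \bI_k$. The matrix $\mathbf{P}$ is idempotent, since $\mathbf{P}^2 = \spooled\bVfda(\bVfda^T\spooled\bVfda)\bVfda^T = \mathbf{P}$, and it satisfies $\bVfda^T(\bI_p-\mathbf{P}) = \zero$ because $\bVfda^T\mathbf{P} = (\bVfda^T\spooled\bVfda)\bVfda^T = \bVfda^T$; transposing gives $(\bI_p-\mathbf{P})^T\bVfda = \zero$ as well. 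In this notation the two estimators read $\wh\bmu_i = \bxbar + \mathbf{P}(\bxbar_i-\bxbar)$ and $\wh\bSigma = \spooled + (\bI_p-\mathbf{P})\,\bB\,(\bI_p-\mathbf{P})^T$.

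To justify these forms (rather than merely cite them), I would write the Gaussian log-likelihood, split each group contribution via $\sum_h(\bx_{jh}-\bmu_j)^T\bSigma^{-1}(\bx_{jh}-\bmu_j) = \sum_h(\bx_{jh}-\bxbar_j)^T\bSigma^{-1}(\bx_{jh}-\bxbar_j) + n_j(\bxbar_j-\bmu_j)^T\bSigma^{-1}(\bxbar_j-\bmu_j)$, and note that, for fixed $\bSigma$, maximizing over rank-constrained centroids is a $\bSigma^{-1}$-weighted low-rank approximation of $\{\bxbar_i\}$ about $\bxbar$, solved by projecting each $\bxbar_i$ onto the leading $k$-dimensional generalized-eigenvector subspace. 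Plugging the optimal means into the profile likelihood together with the Gaussian identity $\wh\bSigma = n^{-1}\sum_{j,h}(\bx_{jh}-\wh\bmu_j)(\bx_{jh}-\wh\bmu_j)^T$ produces the pooled term $\spooled$ plus the uncaptured between-class part $(\bI_p-\mathbf{P})\bB(\bI_p-\mathbf{P})^T$. The hard part is the fixed-point argument: one must verify that the optimal subspace computed under $\wh\bSigma$ coincides with $\mathrm{span}(\bVfda)$, the generalized eigenvectors of $(\bB,\bW)$. This closes because $(\bI_p-\mathbf{P})^T\bVfda=\zero$ makes the correction term annihilate $\bVfda$, so $\wh\bSigma\bVfda = \spooled\bVfda$ and hence $\mathbf{P} = \spooled\bVfda\bVfda^T = \wh\bSigma\bVfda\bVfda^T$, which is equivalent to $\wh\bSigma^{-1}\mathbf{P} = \bVfda\bVfda^T$.

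With these identities available, the ``moreover'' statement is a direct expansion that I expect to be routine. Setting $\bu = \bx-\bxbar$ and $\mathbf{d}_i = \bxbar_i-\bxbar$, so that $\bx-\wh\bmu_i = \bu-\mathbf{P}\mathbf{d}_i$, I would expand $\|\bx-\wh\bmu_i\|_{\wh\bSigma^{-1}}^2 = \bu^T\wh\bSigma^{-1}\bu - 2\,\mathbf{d}_i^T\mathbf{P}^T\wh\bSigma^{-1}\bu + \mathbf{d}_i^T\mathbf{P}^T\wh\bSigma^{-1}\mathbf{P}\mathbf{d}_i$. The relation $\mathbf{P}^T\wh\bSigma^{-1} = \bVfda\bVfda^T$ turns the cross term into $-2\,\mathbf{d}_i^T\bVfda\bVfda^T\bu$, while $\bVfda^T\wh\bSigma\bVfda = \bVfda^T\spooled\bVfda = \bI_k$ (again from $\bVfda^T(\bI_p-\mathbf{P})=\zero$) collapses the last term to $\mathbf{d}_i^T\bVfda\bVfda^T\mathbf{d}_i = \|\bVfda^T\mathbf{d}_i\|^2$. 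Comparing against the expansion of $\|\bVfda^T(\bu-\mathbf{d}_i)\|^2 + \bu^T(\wh\bSigma^{-1}-\bVfda\bVfda^T)\bu$ shows the two match term by term, and since $\bVfda^T(\bu-\mathbf{d}_i) = \bVfda^T(\bx-\bxbar_i)$, the residual summand $\bu^T(\wh\bSigma^{-1}-\bVfda\bVfda^T)\bu$ depends only on $\bx$ and not on $i$, hence is common to all groups. The only genuine difficulty in the whole argument is the constrained MLE and its self-consistency; once $\wh\bmu_i$, $\wh\bSigma$, and $\mathbf{P}=\wh\bSigma\bVfda\bVfda^T$ are in place, the quadratic identity is bookkeeping.
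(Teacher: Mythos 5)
The paper itself gives no proof of this proposition: it is quoted verbatim as Proposition~1 of Hastie and Tibshirani (1996), so any argument you supply goes beyond what the paper does. Your treatment of the ``moreover'' identity is correct and self-contained, and it is the right way to organize it: with $\mathbf P=\spooled\bVfda\bVfda^T$, the normalization $\bVfda^T\spooled\bVfda=\bI_k$ gives $\mathbf P^2=\mathbf P$, $(\bI_p-\mathbf P)^T\bVfda=\zero$, hence $\wh\bSigma\bVfda=\spooled\bVfda$ and $\mathbf P^T\wh\bSigma^{-1}=\bVfda\bVfda^T=\mathbf P^T\wh\bSigma^{-1}\mathbf P$, after which the quadratic expansion of $\|\bu-\mathbf P\mathbf d_i\|^2_{\wh\bSigma^{-1}}$ matches $\|\bVfda^T(\bu-\mathbf d_i)\|^2+\bu^T(\wh\bSigma^{-1}-\bVfda\bVfda^T)\bu$ term by term, with the last summand independent of $i$. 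That part is complete.

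The one place where your argument does not close is the constrained-MLE derivation, which you yourself flag as the hard part. The identities $\wh\bSigma\bVfda=\spooled\bVfda$ and $\wh\bSigma^{-1}\mathbf P=\bVfda\bVfda^T$ show that $\mathrm{span}(\bVfda)$ is \emph{self-consistent} --- i.e., it is a fixed point of the alternating scheme ``project the centroids, then re-estimate the covariance'' --- but they do not show that this fixed point is the global maximizer of the rank-constrained likelihood; that requires the eigenvalue ordering argument in Hastie--Tibshirani. There is also a small normalization wrinkle: the profile MLE of the covariance produces $\bW=n^{-1}\sum_{j,h}(\bx_{jh}-\bxbar_j)(\bx_{jh}-\bxbar_j)^T$ rather than $\spooled=\tfrac{n}{n-g}\bW$ in the first summand, so the stated $\wh\bSigma$ is the MLE only up to the usual degrees-of-freedom convention (a discrepancy inherited from the proposition as quoted, not introduced by you). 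Since the proposition is explicitly a citation, your stated fallback --- accept the closed forms of $\wh\bmu_i$ and $\wh\bSigma$ from the reference and prove the distance identity --- is the appropriate and fully rigorous reading; only if you intend the MLE derivation to stand on its own does the optimality step need to be filled in.
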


We can conclude that classifying a projected data point according to the Euclidean distance from the projected means is equivalent to classifying it in the original space according to the Mahalanobis distance, where $\bmu_i$ and $\bSigma$ are estimated appropriately based on the rank of $\bVfda$, i.e., by $\bxbar_i$ and $\spooled$ when $\rank(\bVfda) = r$ (Proposition~\ref{prop:eulda}), and by $\wh\bmu_i$ and $\wh\bSigma$ when $\rank(\bVfda) = k < r$ (Proposition~\ref{prop:eulowrank}). 
In other words, regardless of the rank of $\bVfda$, $\bx$ is assigned to class $j$ if
\begin{equation}
\label{eq:fda-classifier}
j = \argmin_i \|\bVfda^T (\bx-\bxbar_i)\|^2-2\, \log \tfrac{n_i}{n},
\end{equation}
where $\frac{n_i}{n}$ estimates the prior probability $p_i$ of group $i$. 

We remark that the classification rule \eqref{eq:fda-classifier} does not require any distributional assumption. In view of this, in general, it is not a Bayesian classification rule and thus does not minimize the total probability of misclassification. Nevertheless, FDA with $k = r$ is optimal under the homoscedastic assumption and with multivariate normally distributed data; in this case FDA and LDA coincide.

\subsection{Trace ratio optimization}\label{Sec:TR optimization}
Recently, a different alternative to FDA has been studied (see, e.g., \cite[Ch.~10]{Fukunaga:1990} and \cite{Park:2008,ngo2012trace,shi2021new}).
The essence is that instead of the trace of a ratio (see Section~\ref{SubSec:FDA}), a {\em ratio of traces} is considered.
If we consider the projected data $\bV^T\bx$, another possible generalization of $\rho_{\rm FDA}$ \eqref{eq:FDA.MaxProblem} comes from considering the Euclidean distance between the projected vectors. The between covariance matrix of the projected data can be written as a function of the distances between $\bV^T\bxbar_j$ and $\bV^T\bxbar$ (see, e.g., \cite{ngo2012trace}) in the following way:
\begin{eqnarray*}
\sum_{j=1}^g n_j \, \| \bV^T\bxbar_j-\bV^T\bxbar\|^2
& = & \sum_{j=1}^g n_j \, (\bV^T\bxbar_j-\bV^T\bxbar)^T \, (\bV^T\bxbar_j-\bV^T\bxbar) \\
& = & \sum_{j=1}^g \tr (n_j \, \bV^T(\bxbar_j-\bxbar) \, (\bxbar_j-\bxbar)^T \, \bV) \ = \ n\ \tr (\bV^T\bB\bV).
\end{eqnarray*}
Similarly, for the common variability within the projected groups we have
\[
\sum_{j=1}^g \, \sum_{h=1}^{n_j} \, \| \bV^T\bx_{jh}-\bV^T\bxbar_j\|^2 = n\ \tr (\bV^T\bW \, \bV).
\]
The new optimization problem aims at finding the orthogonal projection $\bVtr \in \mathbb R^{p\times k}$ that maximizes the trace ratio (TR):
\begin{equation}\label{eq:TR.MaxProblem}
\bVtr = \argmax_{\bV^T\bV = \bI_k} \ \rho(\bV) = \frac{\tr(\bV^T\bB\bV)}{\tr(\bV^T\bW \, \bV)}.
\end{equation}
Other variants of the problem \eqref{eq:TR.MaxProblem} have been considered in \cite[Ch.~10]{Fukunaga:1990} and \cite{Park:2008,ngo2012trace,shi2021new}; these include a different set of constraints on $\bV$ or consider matrices different from $\bW$ in the denominator of $\rho$. 

In the one-dimensional case, when $k = 1$, problems \eqref{eq:FDA.MaxProblem}, \eqref{eq:gep}, and \eqref{eq:TR.MaxProblem} coincide, up to the scaling of the solution vector. For $k > 1$, the trace ratio problem \eqref{eq:TR.MaxProblem} does not have an explicit solution and thus relies on iterative approaches; see, e.g., \citep{Jia.et.al:2009,ngo2012trace,Wang.et.al:2007,Zhao.et.al:2013}. Still, the authors of \cite{shi2021new} report that the TR problem reduces to a least squares problem for the high-dimensional case
$p \ge n$, and provide a closed form solution. 

The existence of $\bVtr$ is guaranteed under a milder hypothesis on the within covariance matrix. We hereby report \cite[Prop.~3.2]{ngo2012trace} for completeness.

\begin{proposition}\label{prop:rank}
Let $\bB$ be symmetric and $\bW$ symmetric positive semidefinite, with $\rank(\bW) \ge p-k+1$. Then \eqref{eq:TR.MaxProblem} admits a finite maximum.
\end{proposition}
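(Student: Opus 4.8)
The plan is to prove existence of the maximum by the extreme value theorem: the feasible set is compact, so the only thing that can go wrong is that the objective $\rho(\bV) = \tr(\bV^T\bB\bV)/\tr(\bV^T\bW\,\bV)$ fails to be continuous because its denominator vanishes somewhere on the feasible set. Thus the whole argument reduces to showing that the rank hypothesis on $\bW$ rules out a vanishing denominator, after which compactness does the rest.

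First I would record that the feasible set $\{\bV \in \R^{p\times k} : \bV^T\bV = \bI_k\}$ (the Stiefel manifold) is compact: it is bounded, since each column is a unit vector, and closed, being the preimage of $\bI_k$ under the continuous map $\bV \mapsto \bV^T\bV$. Both $\tr(\bV^T\bB\bV)$ and $\tr(\bV^T\bW\,\bV)$ are polynomial, hence continuous, in the entries of $\bV$, so $\rho$ is continuous at every feasible $\bV$ where the denominator is nonzero.

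The crux is to show the denominator is strictly positive on the feasible set. Writing $\bV = [\bv_1,\dots,\bv_k]$ and using that $\bW$ is positive semidefinite, I would express $\tr(\bV^T\bW\,\bV) = \sum_{i=1}^k \bv_i^T\bW\bv_i = \sum_{i=1}^k \|\bW^{1/2}\bv_i\|^2 \ge 0$, with equality if and only if every column $\bv_i$ lies in $\ker(\bW)$. Since the columns of a feasible $\bV$ are orthonormal, they span a $k$-dimensional subspace; the denominator therefore vanishes only if $\ker(\bW)$ contains such a subspace, i.e.\ only if $\dim\ker(\bW) = p - \rank(\bW) \ge k$. But the hypothesis $\rank(\bW) \ge p-k+1$ forces $\dim\ker(\bW) \le k-1$, a contradiction; hence $\tr(\bV^T\bW\,\bV) > 0$ for every feasible $\bV$.

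Finally I would assemble the pieces: on the compact feasible set the denominator is continuous and strictly positive, so by Weierstrass it attains a positive minimum; consequently $\rho$ is a ratio of continuous functions with nonvanishing denominator, hence continuous on a compact set, and therefore attains a finite maximum. The one step deserving care — and the only place the rank assumption enters — is the dimension count showing $\ker(\bW)$ cannot contain the $k$-dimensional column space of a feasible $\bV$; the remainder is a routine invocation of compactness and continuity.
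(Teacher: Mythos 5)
Your proof is correct. The paper does not supply its own argument for this proposition --- it simply imports it as Prop.~3.2 of the cited trace-ratio reference --- and your compactness-plus-nonvanishing-denominator argument (the rank hypothesis forces $\dim\ker(\bW)\le k-1$, so no $k$-dimensional column space of a feasible $\bV$ can sit inside $\ker(\bW)$, hence the denominator is bounded away from zero on the Stiefel manifold) is exactly the standard reasoning behind that result.
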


In particular, Proposition~\ref{prop:rank} implies that for a finite maximum, $k$ may be larger than $r = \rank(\bB) \le \min\{g-1, p\}$, and $\bW$ does not need to be positive definite.
Several authors (see, e.g., \cite{ngo2012trace}) mention the constraint $\bV^T \bC \bV = \bI_k$ for a certain SPD matrix $\bC$.
Since the eigenvectors of the pair $(\bB, \bW)$ are both $\bB$-orthogonal and $\bW$-orthogonal, we remark that for the convex combination $\bC = (1-\gamma_1)\,\bB + \gamma_1 \bW$, with $0 < \gamma_1 \le 1$ such that $\bC$ is SPD, the TR method is equivalent to a generalized eigenvalue problem \eqref{eq:gep}, which renders FDA.

Algorithm~1 shows the pseudocode for a natural iterative method (see, e.g., \cite{ngo2012trace}) to find the solution to \eqref{eq:TR.MaxProblem}.

\noindent\vrule height 0pt depth 0.5pt width \textwidth \\[-1mm]
{\bf Algorithm~1: Iterative method to solve the trace ratio problem \eqref{eq:TR.MaxProblem}} \\[-3.5mm]
\vrule height 0pt depth 0.3pt width \textwidth \\
{\bf Input:} $\bB$, $\bW \in \R^{p \times p}$, dimension $k$, initial $(p \times k)$ matrix $\bV$ with orthogonal columns. \\
{\bf Output:} Solution $\bV\in \R^{p \times k}$ to \eqref{eq:TR.MaxProblem}.\\
\begin{tabular}{ll}
{\footnotesize 1:} & If $\bV$ is not provided, choose a random $p \times k$ matrix with orthogonal columns \\
& {\bf while not} converged (as determined by $\rho(\bV)$ or $\bV$) \\
{\footnotesize 2:} & \ph{M} $\rho = \rho(\bV) = \frac{\tr(\bV^T\bB\bV)}{\tr(\bV^T\bW \, \bV)}$ \\
{\footnotesize 3:} & \ph{M} Determine (orthogonal) $\bV$ as largest $k$ eigenvectors of $\bB-\rho \bW$ \\
& {\bf end} \\
\end{tabular} \\
\vrule height 0pt depth 0.5pt width \textwidth

We would like to point out that we may also start Algorithm~1 with the orthogonalized FDA solution $\bV = \bVfda \in \mathbb R^{p\times k}$.

The global convergence of Algorithm~1 is stated in, e.g., \cite[Thm.~5.1]{zhang2010fast}. There the uniqueness of the solution $\bVtr$ is also discussed, and it is related to line 3 of Algorithm~1: if there is a gap between the largest $k$ eigenvalues of $\bB - \rho(\bVtr)\,\bW$ and the $p-k$ smallest ones, then the solution is uniquely described by the set $\{\bVtr\bQ\, : \, \bQ \in \mathbb{R}^{k\times k},\, \bQ^T \bQ = \bI_k\}$.

The stopping criterion may be based on the (relative) error between two consecutive $\rho$-values, or the angle between two consecutive subspaces spanned by the columns of $\bV$; in our implementation, we use the first approach.

Unlike FDA, TR does not come with a natural classifier. In \cite{ngo2012trace}, the method of the $k$-nearest neighbors is used to classify new data points in the projected space. We propose to classify a data point to the group with the closest projected sample mean vectors, according to the Euclidean distance and the class proportions:
\[
j = \argmin_i \|\bVtr^T (\bx-\bxbar_i)\|^2-2\, \log\tfrac{n_i}{n},
\]
as it is done in FDA, although we do not have a connection with the Mahalanobis distance in the original space and the Euclidean distance in the projected space.

\subsection{FDA, TR, and the reduced dimension}
We now study the solution to FDA and TR, and some relevant quantities, as a function of $k$, the selected reduced dimension.
We recall that for FDA, $1 \le k \le r = \rank(\bB)$; since $\bW$ is assumed nonsingular, the reduced dimension for TR is not restricted, thus $1 \le k \le p = \rank(\bW)$.

The FDA solution $\bVfda^{(k)}$ corresponds to the first $k$ generalized eigenvectors of $(\bB,\bW)$; therefore it holds $\text{span}(\bVfda^{(k)}) \subset \text{span}(\bVfda^{(k+1)})$ for $k = 1, \dots, r-1$. If we are interested in choosing the best value for $k$, it is enough to compute the eigenvalue decomposition of $(\bB,\bW)$ once and retain the desired number of eigenvectors. Another consequence is the following result. For simplicity, we choose $\bVfda^{(k)}$ to have $\bW$-orthogonal eigenvectors (instead of $\spooled$-orthogonal).

\begin{proposition}
Let the eigenvectors of $(\bB, \bW)$ be scaled $\bW$-orthogonal. Then
$\tr((\bVfda^{(k)})^T \, \bB\bVfda^{(k)}) = \lambda_1 + \cdots + \lambda_k$, \ $\tr((\bVfda^{(k)})^T \, \bW \, \bVfda^{(k)}) = \tr(\bI_k)=k$, and therefore both quantities are increasing as a function of $k$.

The trace ratio $\rho(\bVfda^{(k)})$ for the FDA subspace is equal to $(\lambda_1 + \cdots + \lambda_k)/k$, and therefore is a non-increasing function of $k$. 
\end{proposition}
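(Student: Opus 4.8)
The plan is to exploit the defining relations of the generalized eigenvectors and then reduce the two trace computations to elementary diagonal sums. First I would recall that, by hypothesis, the columns $\bv_1, \dots, \bv_k$ of $\bVfda^{(k)}$ satisfy the generalized eigenvalue relation $\bB\bv_i = \lambda_i\,\bW\bv_i$ together with the $\bW$-orthonormal scaling $\bv_i^T\bW\bv_j = \delta_{ij}$. This scaling gives $(\bVfda^{(k)})^T\bW\bVfda^{(k)} = \bI_k$ directly, whence $\tr((\bVfda^{(k)})^T\bW\bVfda^{(k)}) = \tr(\bI_k) = k$, which settles the second identity with no further work.

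For the between-class trace I would compute the $(i,j)$ entry of $(\bVfda^{(k)})^T\bB\bVfda^{(k)}$ by substituting the eigenvalue relation into one factor: $\bv_i^T\bB\bv_j = \lambda_j\,\bv_i^T\bW\bv_j = \lambda_j\,\delta_{ij}$. Hence $(\bVfda^{(k)})^T\bB\bVfda^{(k)} = \diag(\lambda_1,\dots,\lambda_k)$, and reading off the diagonal gives $\tr((\bVfda^{(k)})^T\bB\bVfda^{(k)}) = \lambda_1 + \cdots + \lambda_k$. Monotonicity in $k$ of both traces is then immediate over the range $1 \le k \le r$: each $\lambda_i > 0$ (these are the positive eigenvalues, since $k \le r = \rank(\bB)$), so appending the $(k+1)$-th eigenvector strictly increases the between-class sum, while $k \mapsto k$ trivially increases.

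For the trace ratio, dividing the two expressions yields $\rho(\bVfda^{(k)}) = (\lambda_1 + \cdots + \lambda_k)/k$, which is simply the running average of the ordered eigenvalues. To show it is non-increasing I would write $S_k = \lambda_1 + \cdots + \lambda_k$ and reduce $S_{k+1}/(k+1) \le S_k/k$, after clearing denominators, to the single inequality $\lambda_{k+1} \le S_k/k$. This holds because $S_k/k$ is the mean of $\lambda_1,\dots,\lambda_k$, each of which is at least $\lambda_k \ge \lambda_{k+1}$ by the decreasing ordering of the eigenvalues.

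The only point demanding any care—and the one I would state most explicitly—is this last inequality, i.e.\ the elementary fact that appending a term no larger than the current average cannot raise that average. Everything else amounts to substituting the generalized eigenvalue relation and reading diagonal entries. I would also remark that the whole argument relies solely on $\lambda_1 \ge \cdots \ge \lambda_{k+1} > 0$ and the $\bW$-orthonormality of $\bVfda^{(k)}$, so it is purely spectral and uses no property of $\bVtr$ or of the TR iteration.
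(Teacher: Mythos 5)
Your proof is correct and complete: the two trace identities follow exactly as you say from the $\bW$-orthonormal scaling and the relation $\bB\bv_j=\lambda_j\bW\bv_j$, and your reduction of the monotonicity of the running average $(\lambda_1+\cdots+\lambda_k)/k$ to $\lambda_{k+1}\le S_k/k$ is the right elementary step. The paper states this proposition without proof, treating it as immediate, and your argument is precisely the verification it leaves implicit.
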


The quantity $\tr((\bVfda^{(k)})^T \, \bB\bVfda^{(k)})$ is mentioned in \cite[(11-68)]{johnson2007applied} as a theoretical criterion to choose $k$.

Now we turn our attention to the TR problem.
Algorithm~1 needs the value of $k$ in advance; the processes and outputs for different values of $k$ are generally unrelated, and therefore we expect $\text{span}(\bVtr^{(k)}) \not\subset \text{span}(\bVtr^{(k+1)})$.
The following example illustrates this statement.
\begin{example} \rm
By applying an orthogonal transformation, one may choose the within scatter matrix $\bW$ to be diagonal without loss of generality.
Therefore, we generate $1000$ random problems, where $\bW = {\rm diag}(1,2, \dots, 20)$ is fixed, while the between scatter matrix is given by $\bB = \bA\bA^T$, where $\bA$ is a $20 \times 10$ matrix with normally distributed entries, after which $\bA$ is column centered. The columns represent $10$ groups.
For each problem, we run TR with $k = 1,2,\dots,8$ and compute the angle between two consecutive solutions, $\bVtr^{(k)}$ and $\bVtr^{(k+1)}$. Figure~\ref{fig:angles}(a) reports the sorted angles $\angle(\bVtr^{(2)}, \, \bVtr^{(3)})$ for each problem. Their values range from rather small to approximately $\pi/5$, meaning that the two subspaces can be rather different. 
In Figure~\ref{fig:angles}(b), we carry out similar experiments, but now we display the median $\angle(\bVtr^{(k)}, \, \bVtr^{(k+1)})$ over 1000 experiments for various values of $k$.

\begin{figure}[htb!] 
\centering
\includegraphics[width=0.3\linewidth]{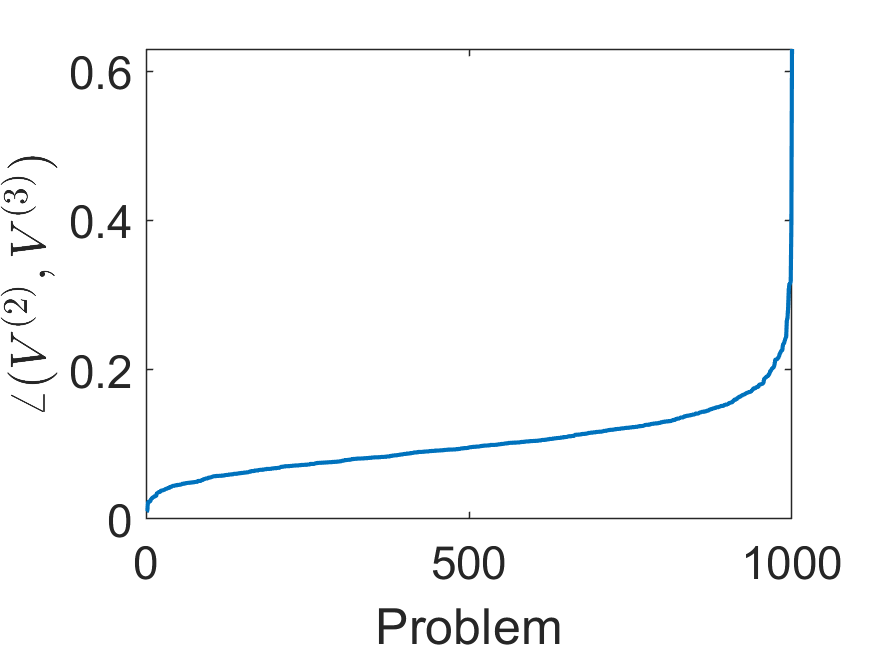} \qquad
\includegraphics[width=0.3\linewidth]{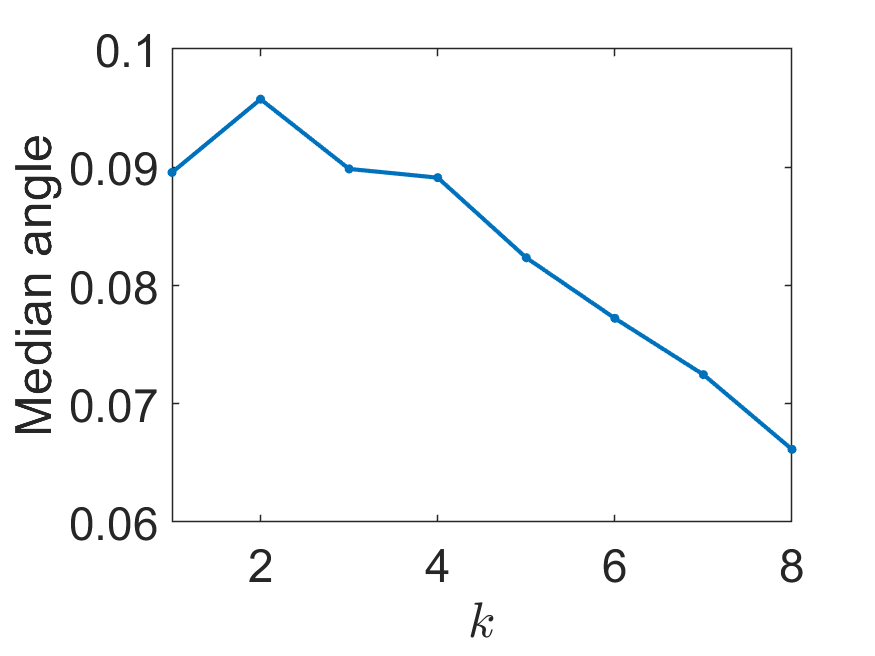} \\ (a) \hspace{40mm} (b)
\caption{(a): Subspace angle between two-dimensional and three-dimensional TR spaces $\bVtr^{(2)}$ and $\bVtr^{(3)}$. (b): Median angle  $\angle(\bVtr^{(k)}, \, \bVtr^{(k+1)})$ over 1000 random problems, for $k = 1, \dots, 8$.}
\label{fig:angles}
\end{figure}
\end{example}

Since $\bVtr^{(k)} \not\subset \bVtr^{(k+1)}$, the quantities $\tr((\bVtr^{(k)})^T \, \bB\bVtr^{(k)})$ and $\tr((\bVtr^{(k)})^T \, \bW\bVtr^{(k)})$ do not have an easy expression and interpretation. Nevertheless, we can show that the optimal trace ratio $\rho^{(k)} = \rho(\bVtr^{(k)})$ is a non-increasing function of $k$. To the best of our knowledge, this result is new.

\begin{proposition}\label{Decrease_of_rho}
Let $\rho^{(k)}$ be the optimal function value of the TR, for a $k$-dimensional subspace of $\mathbb R^p$ and the pencil $(\bB,\,\bW)$, where $\bW$ is SPD. Then $\rho^{(k)}$ is a non-increasing function of $k$, for $k = 1, \dots, p$.
\end{proposition}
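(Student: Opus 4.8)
The plan is to exploit the fact that the optimal trace ratio for dimension $k+1$ is a ratio of two \emph{sums} of $k+1$ terms, and that such a ratio of sums (a mediant) is a weighted average of the termwise ratios. First I would fix an orthonormal basis $\bVtr^{(k+1)} = [\bv_1, \dots, \bv_{k+1}]$ realizing the maximum in \eqref{eq:TR.MaxProblem} for dimension $k+1$; this optimum is attained because $\rho(\cdot)$ is continuous and the feasible set $\{\bV : \bV^T\bV = \bI_k\}$ is compact, and $\rho$ is well defined since $\bW$ is SPD. Setting $a_i = \bv_i^T\bB\bv_i$ and $b_i = \bv_i^T\bW\bv_i$ for $i = 1, \dots, k+1$, strict positivity of $b_i$ (again from $\bW$ SPD and $\bv_i \ne \zero$) gives
\[
\rho^{(k+1)} = \frac{\sum_{i=1}^{k+1} a_i}{\sum_{i=1}^{k+1} b_i} = \sum_{i=1}^{k+1} \frac{b_i}{\sum_{\ell} b_\ell}\cdot \frac{a_i}{b_i},
\]
which exhibits $\rho^{(k+1)}$ as a convex combination of the termwise ratios $a_i/b_i$.

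Next I would use that a convex combination cannot lie strictly below all of its constituents, so there exists an index $j$ with $a_j/b_j \le \rho^{(k+1)}$. The key elementary computation is then that deleting a term whose ratio is at most the mediant does not decrease the mediant: with $A = \sum_i a_i$ and $B = \sum_i b_i$,
\[
\frac{A - a_j}{B - b_j} - \frac{A}{B} = \frac{b_j}{B - b_j}\left( \frac{A}{B} - \frac{a_j}{b_j} \right) \ge 0,
\]
where $B - b_j > 0$ since there are at least two terms (as $k+1 \ge 2$) and all $b_i > 0$. Removing the $j$th column of $\bVtr^{(k+1)}$ yields a $p \times k$ matrix $\wh\bV$ which still has orthonormal columns and satisfies $\rho(\wh\bV) = (A - a_j)/(B - b_j) \ge \rho^{(k+1)}$.

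Finally, since $\wh\bV$ is feasible for the $k$-dimensional trace ratio problem, optimality of $\rho^{(k)}$ gives $\rho^{(k)} \ge \rho(\wh\bV) \ge \rho^{(k+1)}$. As $k$ ranges over $1, \dots, p-1$ this is exactly the asserted monotonicity.

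I do not anticipate a genuine obstacle here: the whole argument hinges on recognizing the mediant structure of $\rho^{(k+1)}$, after which the estimates are elementary. The only points that need care are the strict positivity of the $b_i$ (guaranteed by $\bW$ being SPD, which is the hypothesis) and the presence of at least two terms so that $B - b_j > 0$; both hold throughout the range $1 \le k \le p-1$. It is worth emphasizing that, in contrast to the FDA case, no nesting $\text{span}(\bVtr^{(k)}) \subset \text{span}(\bVtr^{(k+1)})$ is used or needed — the deleted-column subspace $\wh\bV$ is merely a convenient feasible competitor, not the optimal $k$-dimensional TR subspace.
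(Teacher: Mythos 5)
Your proof is correct, but it takes a genuinely different route from the paper's. The paper argues upward from $k$ to $k+1$: it invokes the first-order optimality condition $\tr((\bVtr^{(k)})^T(\bB-\rho^{(k)}\bW)\,\bVtr^{(k)})=0$ together with the fact that $\bVtr^{(k)}$ spans the top-$k$ eigenspace of $\bB-\rho^{(k)}\bW$, deduces $\lambda_{k+1}(\bB-\rho^{(k)}\bW)\le 0$, and hence that $\tr(\bV^T(\bB-\rho^{(k)}\bW)\,\bV)\le 0$ for every orthonormal $p\times(k+1)$ matrix $\bV$, which is equivalent to $\rho^{(k+1)}\le\rho^{(k)}$. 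You argue downward from $k+1$ to $k$: you read $\rho^{(k+1)}$ as a mediant (weighted average) of the columnwise Rayleigh quotients $a_i/b_i$, pick a column whose quotient does not exceed the average, and use the elementary deletion identity to produce a feasible $k$-column competitor with value at least $\rho^{(k+1)}$. Your route is more self-contained: it needs only compactness of the Stiefel manifold for attainment and a one-line algebraic identity, and it avoids the characterization of the TR maximizer as the leading eigenvectors of $\bB-\rho\bW$, which the paper imports from the convergence analysis of Algorithm~1. What the paper's route buys is the intermediate fact $\lambda_{k+1}(\bB-\rho^{(k)}\bW)\le 0$, which connects the monotonicity to the eigenvalue gap governing uniqueness and the perturbation bound of Theorem~\ref{thm:pert}. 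One minor caveat: the paper notes that SPD can be weakened to $\tr((\bVtr^{(k)})^T\bW\bVtr^{(k)})>0$ for all $k$, whereas your argument as written needs each individual $b_i=\bv_i^T\bW\bv_i$ to be positive so that every quotient $a_i/b_i$ is defined; under the stated SPD hypothesis this is automatic, so your proof of the proposition as stated is complete.
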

\begin{proof}
Let $\bVtr^{(k)} \in \mathbb{R}^{p\times k}$ be the solution to TR with optimal value $\rho^{(k)}$, for $k = 1, \dots p-1$. Because of the linearity of the trace, we can write
\[
\rho^{(k+1)} - \rho^{(k)} = \frac{\tr((\bVtr^{(k+1)})^T \, (\bB - \rho^{(k)}\bW) \, \bVtr^{(k+1)})}{\tr((\bVtr^{(k+1)})^T \, \bW \bVtr^{(k+1)})},
\]
where the denominator is positive.
Denote by $\lambda_i(\bA)$ the $i$th largest eigenvalue of a matrix $\bA$.
We recall that
\[
0 = \tr((\bVtr^{(k)})^T \, (\bB - \rho^{(k)}\bW) \, \bVtr^{(k)}) = \sum_{i=1}^k \lambda_i(\bB - \rho^{(k)}\bW).
\]
The first equality comes from the first-order necessary condition for the maximum (see, e.g., \cite{ngo2012trace}), while the second follows from the fact that $\bVtr^{(k)}$ contains the $k$ largest eigenvectors of $\bB-\rho^{(k)} \bW$.
Since the sum of the $k$ largest eigenvalues of $\bB - \rho^{(k)}\bW$ is zero, the $(k+1)$st largest eigenvalue must be nonpositive: $\lambda_{k+1}(\bB - \rho^{(k)}\bW) \le 0$. Then 
\[
\tr((\bVtr^{(k+1)})^T(\bB - \rho^{(k)}\bW) \, \bVtr^{(k+1)}) \le
\max_{\bV^T\bV = \bI_{k+1}}
\tr(\bV^T(\bB - \rho^{(k)}\bW)\,\bV) = \lambda_{k+1}(\bB - \rho^{(k)}\bW)
 \le 0.
\]
This proves that $\rho^{(k+1)} - \rho^{(k)} \le 0$.
\end{proof}

The monotonicity from this proposition does not need to be strict, in the case of a multiple generalized eigenvalue.
An easy example of this is $\bB = \diag(2,4)$ and $\bW = \diag(1,2)$, when $\rho^{(1)} = \rho^{(2)} = 2$.
We also note that, for this result, the hypothesis that $\bW$ is SPD can be weakened; one only needs that $\tr((\bVtr^{(k)})^T \, \bW \bVtr^{(k)})>0$ for all $k$.

By rather extensive experimentation, we have reason to believe that $\tr((\bVtr^{(k)})^T\bB\bVtr^{(k)})$ is a non-decreasing function of $k$, just as for FDA. A proof or counterexample is currently lacking.

\begin{conjecture} \label{conj:1}
Let $\bB$ be symmetric positive semidefinite and $\bW$ be SPD, and let $\bVtr^{(k)}$ be the quantity determined by the trace ratio method.
The quantity $\tr((\bVtr^{(k)})^T\bB\bVtr^{(k)})$ is a non-decreasing function of $k$.
\end{conjecture}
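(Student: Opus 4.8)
The plan is to exploit the fixed-point characterization already used in the proof of Proposition~\ref{Decrease_of_rho}: $\bVtr^{(k)}$ spans the $k$ largest eigenvectors of $\bB-\rho^{(k)}\bW$, and $\rho^{(k)}$ is the value for which $\sum_{i=1}^k\lambda_i(\bB-\rho^{(k)}\bW)=0$. For a parameter $\rho\ge 0$ and a dimension $m$, let $\bU_m(\rho)$ denote a matrix whose orthonormal columns span the top-$m$ eigenspace of $\bB-\rho\bW$, and set $b_m(\rho)=\tr(\bU_m(\rho)^T\bB\,\bU_m(\rho))$ and $w_m(\rho)=\tr(\bU_m(\rho)^T\bW\,\bU_m(\rho))$. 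The quantity in the conjecture is then $\tr((\bVtr^{(k)})^T\bB\bVtr^{(k)})=b_k(\rho^{(k)})$. I would compare $b_{k+1}(\rho^{(k+1)})$ with $b_k(\rho^{(k)})$ by passing through the intermediate value $b_{k+1}(\rho^{(k)})$, writing
\[
b_{k+1}(\rho^{(k+1)}) - b_k(\rho^{(k)}) = \big(b_{k+1}(\rho^{(k+1)}) - b_{k+1}(\rho^{(k)})\big) + \big(b_{k+1}(\rho^{(k)}) - b_k(\rho^{(k)})\big),
\]
and showing that each parenthesis is nonnegative.

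The second parenthesis is the easy one: at the \emph{fixed} shift $\rho^{(k)}$, choosing the top-$(k+1)$ eigenspace to contain the top-$k$ one, the difference $b_{k+1}(\rho^{(k)})-b_k(\rho^{(k)})$ equals $\bu^T\bB\bu$ for the single extra eigenvector $\bu$, which is nonnegative because $\bB$ is positive semidefinite. For the first parenthesis I would prove the key monotonicity lemma: for each fixed $m$, the function $b_m(\rho)$ is non-increasing on $[0,\infty)$. Combined with $\rho^{(k+1)}\le\rho^{(k)}$ from Proposition~\ref{Decrease_of_rho}, this makes the first parenthesis nonnegative and finishes the proof.

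The lemma follows from the variational characterization of $\bU_m(\rho)$, avoiding any differentiability assumption. For $\rho_1<\rho_2$, the optimality of $\bU_m(\rho_1)$ for $\tr(\bV^T(\bB-\rho_1\bW)\bV)$ and of $\bU_m(\rho_2)$ for $\tr(\bV^T(\bB-\rho_2\bW)\bV)$, each tested against the other matrix, yield
\[
\rho_1\,(w_m(\rho_1)-w_m(\rho_2)) \ \le\ b_m(\rho_1)-b_m(\rho_2) \ \le\ \rho_2\,(w_m(\rho_1)-w_m(\rho_2)).
\]
Subtracting forces $(\rho_2-\rho_1)\,(w_m(\rho_1)-w_m(\rho_2))\ge 0$, hence $w_m(\rho_1)\ge w_m(\rho_2)$; feeding this back into the left inequality and using $\rho_1\ge 0$ gives $b_m(\rho_1)\ge b_m(\rho_2)$, as claimed.

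The main obstacle I expect is degeneracy. When $\bB-\rho^{(k)}\bW$ (or $\bB-\rho^{(k+1)}\bW$) has no spectral gap after the $k$th (respectively $(k+1)$st) eigenvalue, the relevant eigenspaces---and hence $\bVtr^{(k)}$ itself---are not unique, so that $b_m(\rho)$ and even the target $\tr((\bVtr^{(k)})^T\bB\bVtr^{(k)})$ become set-valued, and the telescoping above requires a carefully coordinated nested choice of eigenspaces. Under the gap/uniqueness hypothesis already invoked for $\bVtr^{(k)}$ elsewhere in the paper the argument is clean, because the endpoint inequalities of the lemma hold for \emph{any} optimal subspaces. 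Extending it to the fully degenerate regime would require a continuity or perturbation argument (perturbing $\bB$ to split multiplicities, proving the inequality in the generic case, and passing to the limit); making that limit rigorous, given that the quantity can itself be ambiguous in the degenerate case, is the delicate point that I expect to be the real work.
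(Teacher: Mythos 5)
The paper offers no proof of this statement: it is posed explicitly as an open conjecture (``A proof or counterexample is currently lacking''), supported only by numerical experimentation. So there is nothing to compare your argument against --- instead I have checked it on its own terms, and I believe it is correct and actually settles the conjecture in the non-degenerate case. The two ingredients both hold up. The nesting step is immediate: since $\bVtr^{(k)}$ spans a top-$k$ invariant subspace of $\bB-\rho^{(k)}\bW$, you may extend it by an eigenvector $\bu$ for $\lambda_{k+1}(\bB-\rho^{(k)}\bW)$ to a top-$(k+1)$ invariant subspace, and the increment $\bu^T\bB\bu\ge 0$ uses exactly the positive semidefiniteness of $\bB$ that the conjecture assumes. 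The monotonicity lemma is the real find: the two variational inequalities
\[
\rho_1\,(w_m(\rho_1)-w_m(\rho_2)) \ \le\ b_m(\rho_1)-b_m(\rho_2) \ \le\ \rho_2\,(w_m(\rho_1)-w_m(\rho_2))
\]
are just the statement that each $\bU_m(\rho_i)$ maximizes its own shifted trace tested against the other, they hold for \emph{any} choice of maximizers, and together with $\rho_1\ge 0$ (true because $\bB$ is PSD, so $\rho^{(k+1)}\ge 0$) and $\rho^{(k+1)}\le\rho^{(k)}$ from Proposition~\ref{Decrease_of_rho} they give exactly what the telescoping needs. Your caution about degeneracy is warranted and not merely cosmetic: if, say, $\bB=2\bW$ with $\bW=\diag(10,1,1)$, every frame is a TR maximizer and one can pick a $1$-dimensional solution with $\tr(\bV^T\bB\bV)=20$ and a $2$-dimensional one with value $4$, so without the spectral-gap/uniqueness hypothesis the conjectured quantity is set-valued and the monotonicity can fail for adversarial selections. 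Under the gap condition that the paper itself invokes for well-posedness of $\bVtr^{(k)}$ (and again in Theorem~\ref{thm:pert}), your argument is complete, and it would be worth writing up as a proposition replacing the conjecture, with the degenerate case excluded by hypothesis.
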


If this conjecture is true, the following related result follows as well.

\begin{corollary}
If Conjecture~\ref{conj:1} holds, then $\tr((\bVtr^{(k)})^T \, \bW \bVtr^{(k)})$ is also non-decreasing in $k$.
\end{corollary}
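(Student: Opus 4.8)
The plan is to express the within-trace as a ratio of two quantities whose monotonic behavior in $k$ has already been established, and then combine the two monotonicities. Writing $b^{(k)} := \tr((\bVtr^{(k)})^T\bB\bVtr^{(k)})$ for the between-trace and $w^{(k)} := \tr((\bVtr^{(k)})^T\bW\bVtr^{(k)})$ for the within-trace, the definition of the optimal trace ratio gives $\rho^{(k)} = b^{(k)}/w^{(k)}$, hence $w^{(k)} = b^{(k)}/\rho^{(k)}$. The key observation is that the numerator $b^{(k)}$ is non-decreasing in $k$ (this is precisely Conjecture~\ref{conj:1}, which we are assuming), while the denominator $\rho^{(k)}$ is non-increasing in $k$ (this is Proposition~\ref{Decrease_of_rho}); so their quotient $w^{(k)}$ should be non-decreasing.

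First I would record the positivity facts needed to make the divisions legitimate. Since $\bW$ is SPD and $\bVtr^{(k)}$ has orthonormal columns, $w^{(k)}>0$ for every $k$. Assuming $\bB\ne\zero$ (the statement being vacuous otherwise), one can always select $k$ orthonormal columns whose span meets the range of $\bB$ nontrivially---for instance, extend a unit eigenvector of $\bB$ associated with a positive eigenvalue to an orthonormal set of size $k\le p$---so that the optimal value satisfies $\rho^{(k)}>0$, and consequently $b^{(k)}=\rho^{(k)}\,w^{(k)}>0$ as well, for all $k=1,\dots,p$.

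Then I would simply chain the two inequalities. By Conjecture~\ref{conj:1}, $b^{(k+1)}\ge b^{(k)}$, and dividing by the positive number $\rho^{(k+1)}$ preserves the inequality. By Proposition~\ref{Decrease_of_rho}, $\rho^{(k+1)}\le\rho^{(k)}$, so $1/\rho^{(k+1)}\ge 1/\rho^{(k)}$, and multiplying by the positive number $b^{(k)}$ again preserves it. Putting these together yields
\[
w^{(k+1)} = \frac{b^{(k+1)}}{\rho^{(k+1)}} \ge \frac{b^{(k)}}{\rho^{(k+1)}} \ge \frac{b^{(k)}}{\rho^{(k)}} = w^{(k)},
\]
which is the claim.

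There is no serious obstacle here: the result is a short deduction from the two preceding monotonicity statements, one assumed (the conjecture) and one already proved. The only point demanding a little care is the strict positivity of $\rho^{(k)}$, which is what licenses writing $w^{(k)}=b^{(k)}/\rho^{(k)}$ and dividing by $\rho^{(k+1)}$; this is why I would dispose of the degenerate case $\bB=\zero$ at the outset and otherwise justify $\rho^{(k)}>0$ explicitly before running the chain of inequalities.
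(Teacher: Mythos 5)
Your argument is correct and is essentially the paper's own proof: both deduce the monotonicity of $\tr((\bVtr^{(k)})^T\bW\bVtr^{(k)})$ by combining $\rho^{(k+1)}\le\rho^{(k)}$ from Proposition~\ref{Decrease_of_rho} with the conjectured monotonicity of the between-trace, the only cosmetic difference being that the paper rearranges the inequality of ratios while you chain the inequalities through $w^{(k)}=b^{(k)}/\rho^{(k)}$. Your explicit attention to the positivity of $\rho^{(k)}$ is a small point the paper leaves implicit, but it does not change the substance.
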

\begin{proof}
This follows from Proposition \ref{Decrease_of_rho}.
In fact, since $\rho^{(k+1)} \le \rho^{(k)}$,
\[
\frac{\tr((\bVtr^{(k+1)})^T\bB\bVtr^{(k+1)})}{\tr((\bVtr^{(k)})^T\bB\bVtr^{(k)})} \le \frac{\tr((\bVtr^{(k+1)})^T \, \bW \bVtr^{(k+1)})}{\tr((\bVtr^{(k)})^T \, \bW \bVtr^{(k)})}.
\]
\end{proof}

Therefore, if Conjecture~\ref{conj:1} holds, since the trace ratio is decreasing, the quantity $\tr((\bVtr^{(k)})^T \, \bW \bVtr^{(k)})$ increases faster than $\tr((\bVtr^{(k)})^T\bB\bVtr^{(k)})$ as $k$ increases.


From the descriptions of FDA and TR methods, it is clear that any pencil $(\bB,\bW)$, with $\bB$ symmetric and $\bW$ SPD, estimating the between and the within covariance matrices, may be exploited in the contexts of FDA and TR. This motivates the use of robust estimators in Section~\ref{sec:rob} and of the true covariance matrices in the simulation study (Section~\ref{sec:simulation}).

Although FDA seems to have a stronger statistical motivation than TR, in Section~\ref{sec:simulation} we illustrate some situations in which TR performs better than FDA in the classification task.


\section{Effect of contamination in the within and between covariance matrices}\label{sec:Contam effect on W and B}


In the following sections, we express the contaminated theoretical within and between covariance matrices as the original matrices plus a perturbation term. These perturbation terms allow us to derive an upper limit of a proximity measure between the TR solution when no contamination is present, and their equivalent under contamination.

\subsection{Contaminated theoretical between and within covariance matrices}\label{subsec:constscatter}
We model the presence of atypical data as a mixture of two random vectors: the majority of the data comes from a certain (central, clean, or non-contaminated) mixture model $\bX$, and, in each group, there is a probability $\eps$ for getting data generated by a different distribution, characterizing the atypical scenario. Let $\cont\bX_j$ be the contaminating random variable, where $\expec(\cont\bX_j)=\cont\bmu_j$ and $\var(\cont\bX_j)=\cont\bSigma_j$. Let $\bZ_j$ be the contaminated variable that describes the distribution of group $j$:
\[
\bZ_j = (1-\Omega_j)\, \bX_j + \Omega_j \, \cont\bX_j,
\]
with $\Omega_j \sim {\rm Bernoulli}(\eps)$.
In addition, we assume that $\Omega_j$, $\bX_j$, and $\cont\bX_j$ are independent random variables for any value of $j$. Then, we can derive the mean vector and covariance matrix of $\bZ_j$:
\begin{align}
\expec(\bZ_j) & = (1-\eps)\, \bmu_j + \eps \ \cont\bmu_j = \bmu_j + \eps \, (\cont\bmu_j-\bmu_j) = \bmu_j + \eps \, \Delta_j, \quad {\rm where} \quad \Delta_j = \cont\bmu_j-\bmu_j. \label{eq:Bc_4}
\end{align}
The covariance matrix of $\bZ_j$ is given by
\begin{align}
\var(\bZ_j) & = \expec(\bZ_j\bZ_j^T) -\expec(\bZ_j) \expec(\bZ_j)^T\nonumber\\
& = \eps \, \expec(\cont\bX_j(\cont\bX_j)^T) + (1-\eps) \, \expec(\bX_j(\bX_j)^T) \nonumber\\ & \ph{MMM} -\eps^2 \ \cont\bmu_j \, (\cont\bmu_j)^T - (1-\eps)^2 \, \bmu_j \, \bmu_j^T - \eps \,(1-\eps)\,\left(\cont\bmu_j \, \bmu_j^T + \bmu_j \,(\cont\bmu_j)^T\right)\nonumber\\
& = \eps \ \cont\bSigma_j + (1-\eps) \, \bSigma_j + \eps \,(1-\eps)\,\left(\cont\bmu_j \,(\cont\bmu_j)^T + \bmu_j \, \bmu_j^T-\cont\bmu_j\bmu_j^T-\bmu_j(\cont\bmu_j)^T\right)\nonumber\\
& = \eps \ \cont\bSigma_j + (1-\eps) \, \bSigma_j +\eps \, (1-\eps) \, \Delta_j \, \Delta_j^T = \bSigma_j + \eps\left(\cont\bSigma_j-\bSigma_j\right) + \eps \, (1-\eps) \, \Delta_j \, \Delta_j^T .\label{eq:varz1}
\end{align}
Let $\bZ$ be the random vector describing the mixture of the contaminated random vectors $\bZ_j$.
From \eqref{eq:E(X)}, the expectation of $\bZ$ is 
\begin{equation}
\label{eq:Bc_5}
\expec(\bZ) = \sum_{j=1}^g p_j \expec(\bZ_j) 
= \bmu + \eps \, \Deltabar, \quad {\rm where} \quad \Deltabar = \sum_{j=1}^g p_j \, \Delta_j.
\end{equation}
From \eqref{eq:Var(X)} and 
\eqref{eq:varz1}, we can compute the contaminated theoretical within covariance matrix:
\begin{equation}\label{eq:W_contam}
\bW_Z = \sum_{j=1}^g p_j\var(\bZ_j) = \bW + \eps \,\sum_{j=1}^g p_j\,(\cont\bSigma_j-\bSigma_j) + \eps \, (1-\eps) \, \sum_{j=1}^g p_j\,\Delta_j \, \Delta_j^T,
\end{equation}
where $\bW = \sum_{j=1}^g p_j \, \bSigma_j$ is the theoretical within covariance matrix. We notice that when $\cont\bSigma_j = \bSigma_j$ for all $j$, the contaminated theoretical within covariance matrix is the original $\bW$, plus a low-rank matrix.

By using \eqref{eq:Var(X)}, we compute the theoretical between covariance matrix of $\bZ$ by substituting \eqref{eq:Bc_4} and \eqref{eq:Bc_5}:
\begin{align*}
\bB_Z &= \sum_{j=1}^g p_j \, (\expec(\bZ_j) - \expec(\bZ)) \, (\expec(\bZ_j) - \expec(\bZ))^T \\[-1mm]
&= \sum_{j=1}^g p_j \, (\bmu_j +\eps\Delta_j - \bmu - \eps \, \Deltabar) \, (\bmu_j +\eps\Delta_j - \bmu - \eps \, \Deltabar)^T.
\end{align*}
Let us define $\bdelta_j = \bmu_j-\bmu$. It follows that
\begin{equation} \label{eq:B_contam}
\bB_Z = \bB + \eps\sum_{j=1}^g p_j \, [\bdelta_j (\Delta_j - \Deltabar)^T + (\Delta_j - \Deltabar) \, \bdelta_j^T] + \eps^2 \, \sum_{j=1}^g p_j \, (\Delta_j-\Deltabar) \, (\Delta_j-\Deltabar)^T.
\end{equation}
Now suppose that only the first group has been contaminated. This means that $\Delta_j = 0$ for $j \ne 1$, and $\Deltabar = p_1\Delta_1$. Exploiting the fact that $\sum_{j=2}^g p_j \bdelta_j = -p_1\bdelta_1$, we obtain
\begin{align*}
\sum_{j=1}^g p_j \, [\bdelta_j\big(\Delta_j - \Deltabar\big)^T + \big(\Delta_j - \Deltabar\big)\bdelta_j^T] &= p_1\,(1-p_1)\,(\bdelta_1\Delta_1^T + \Delta_1\bdelta_1^T) - p_1^2(\bdelta_1\Delta_1^T + \Delta_1\bdelta_1^T) \\[-2mm]
&= p_1\,(\bdelta_1\Delta_1^T + \Delta_1\bdelta_1^T),
\end{align*}
and $\sum_{j=1}^g p_j\left(\Delta_j-\Deltabar\right)\left(\Delta_j-\Deltabar\right)^T = p_1\,(1-p_1)\,\Delta_1 \, \Delta_1^T$.
Then, up to second-order terms in $\eps$, we can write the contaminated covariance matrices as 
\begin{align}
\bW_Z &= \bW + \Delta\bW = \bW + \eps\,p_1 \, [(\cont\bSigma_1 - \bSigma) + \Delta_1 \, \Delta_1^T] + o(\eps), \nonumber \\[-2.5mm]
\label{eq:cont-oneclass} \\[-2.5mm]
\bB_Z &= \bB\; + \Delta\bB\;= \bB + \eps\,p_1\,(\bdelta_1\Delta_1^T + \Delta_1\bdelta_1^T) + o(\eps). \nonumber
\end{align}
In general, all the contaminated parameters $\expec(\bZ_j)$, $\var(\bZ_j)$, $\expec(\bZ)$, $\bW_Z$, and $\bB_Z$ can be written as their original counterparts ($\expec(\bX_j)$, $\var(\bX_j)$, $\expec(\bX)$, $\bW$, and $\bB$, respectively) plus an additional term each, that quantifies the effect of the contamination.
The additional terms are functions of the percentage of contamination, $\eps$, the probability of the contaminated classes, $p_j$, the parameters of the uncontaminated ($\bmu_j$ and $\bSigma_j$) and contaminating distribution ($\cont\bmu_j$ and $\cont\bSigma_j$).


\subsection{Perturbation analysis of {\rm TR}}\label{subsec:Perturbation}
In Section~\ref{subsec:constscatter}, we have given explicit expressions for the contaminated theoretical between and within covariance matrices, when the contamination happens in one class \eqref{eq:cont-oneclass}. We have stressed that they are of the form $\bB_Z = \bB + \Delta \bB$ and $\bW_Z = \bW + \Delta \bW$, where $\|\Delta\bB\|$, $\|\Delta\bW\| \to 0$ when $\eps \to 0$.
We use this formulation to understand how the TR solution, $\bVtr$, changes when we perturb the pair $(\bB, \bW)$ to $(\bB_Z, \, \bW_Z)$. A generalization of this problem is addressed by Zhang and Yang \cite{zhang2013perturbation}, for arbitrary perturbations such that $\|\Delta\bB\|$, $\|\Delta\bW\|\to 0$. We report their upper bound for the resulting change in $\bVtr$.
By $[\Delta\bB \ \, \Delta\bW]$ we denote the $p \times 2p$ matrix constructed by appending $\Delta\bW$ to $\Delta\bB$.

\begin{theorem}\label{thm:pert} \cite[Thm.~4.1]{zhang2013perturbation}
Let $\bB$ be symmetric and $\bW$ SPD. Let the $p \times k$ matrix $\bVtr$ be the solution to \eqref{eq:TR.MaxProblem} for the pair $(\bB, \bW)$ and suppose that there is a gap $\gamma > 0$ between the $k$th and the $(k+1)$st largest eigenvalues of $\bB-\rho(\bVtr)\,\bW$. Then, when $\eta \to 0$, for all $\|[\Delta\bB \ \, \Delta\bW]\| \le \eta$, there exists $\cont\bVtr$ solution to \eqref{eq:TR.MaxProblem} for the pencil $(\bB+\Delta\bB, \, \bW+\Delta\bW)$ such that 
\[
\|\bVtr \bVtrT-\cont\bVtr\cont\bVtrT\| \le \frac{4\sigma}{\gamma} \, \Big(1+\frac{k}{\tau} \, \|\bW+\Delta\bW\|\Big) + o(\eta),
\]
where $\sigma  = \|\Delta \bB\| + \rho(\bVtr) \, \|\Delta\bW\| \le (1+\rho(\bVtr)) \, \eta$, and $\tau$ is the sum of the $k$ smallest eigenvalues of $\bW$.
\end{theorem}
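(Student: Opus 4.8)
The plan is to reduce the statement to a classical invariant-subspace ($\sin\theta$) perturbation estimate, isolating as the one genuinely delicate point the control of the shift parameter. Set $\rho := \rho(\bVtr)$ and $\cont\rho := \rho(\cont\bVtr)$. By the first-order optimality condition recalled in the proof of Proposition~\ref{Decrease_of_rho}, $\bVtr$ spans the invariant subspace of the $k$ largest eigenvalues of the shifted matrix $\bM := \bB - \rho\,\bW$, and those eigenvalues sum to zero; the perturbed maximizer $\cont\bVtr$ plays the same role for $\cont\bM := (\bB+\Delta\bB) - \cont\rho\,(\bW+\Delta\bW)$. Subtracting, the effective perturbation of the matrix whose top-$k$ eigenspace we track is
\[
\bE := \cont\bM - \bM = \Delta\bB - \cont\rho\,\Delta\bW - (\cont\rho - \rho)\,\bW.
\]
Since $\|\bVtr\bVtrT - \cont\bVtr\cont\bVtrT\|$ is exactly the sine of the largest principal angle between the two top-$k$ eigenspaces (this is the subspace angle $\angle(\bV,\wh\bV)=\arcsin\|\bV\bV^T-\wh\bV\wh\bV^T\|$ introduced earlier), a $\sin\theta$ theorem applied to the pair $(\bM, \bM+\bE)$ with the assumed spectral gap $\gamma$ between the $k$th and $(k+1)$st eigenvalues of $\bM$ yields $\|\bVtr\bVtrT - \cont\bVtr\cont\bVtrT\| \le c\,\|\bE\|/\gamma$ for an absolute constant $c$ (traceable to $c=4$ in the relevant variant). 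The gap persists for small $\eta$ by continuity of eigenvalues, which simultaneously secures the existence of $\cont\bVtr$ via Proposition~\ref{prop:rank}.

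The crux is to show that the coupled term $(\cont\rho - \rho)\,\bW$ in $\bE$ is $O(\eta)$, and more precisely to produce the factor $k/\tau$. To this end I would study the scalar function $f(s) := \sum_{i=1}^k \lambda_i(\bB - s\,\bW)$, which vanishes at $s=\rho$ by optimality, together with its contaminated analogue $\cont f(s) := \sum_{i=1}^k \lambda_i(\bB_Z - s\,\bW_Z)$, which vanishes at $s=\cont\rho$. By the Hellmann--Feynman formula, $f'(s) = -\tr(\bV(s)^T\bW\,\bV(s))$, where $\bV(s)$ is the top-$k$ eigenbasis of $\bB - s\,\bW$; thanks to the gap this is smooth near $\rho$, and the Ky Fan minimum principle gives $-f'(s) = \tr(\bV(s)^T\bW\,\bV(s)) \ge \tau$ uniformly on the relevant interval. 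On the other hand, because $\bB - \cont\rho\,\bW$ and $\bB_Z - \cont\rho\,\bW_Z$ differ by $-(\Delta\bB - \cont\rho\,\Delta\bW)$, Weyl's inequality applied term-by-term gives
\[
|f(\cont\rho)| = |f(\cont\rho) - \cont f(\cont\rho)| \le k\,\|\Delta\bB - \cont\rho\,\Delta\bW\| = k\,\sigma + o(\eta).
\]
Combining these two facts through the mean value theorem yields $|\cont\rho - \rho| \le |f(\cont\rho)|/\tau \le k\,\sigma/\tau + o(\eta)$.

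Substituting back and using $\cont\rho = \rho + O(\eta)$ together with $\|\Delta\bW\| = O(\eta)$ to absorb cross terms into $o(\eta)$,
\[
\|\bE\| \le \|\Delta\bB\| + \cont\rho\,\|\Delta\bW\| + |\cont\rho - \rho|\,\|\bW\| \le \sigma\Big(1 + \tfrac{k}{\tau}\,\|\bW\|\Big) + o(\eta).
\]
Replacing $\|\bW\|$ by $\|\bW+\Delta\bW\|$ alters the right-hand side only by $O(\eta)\cdot O(\eta)=o(\eta)$ (since $\sigma=O(\eta)$), and dividing by $\gamma$ with $c=4$ produces exactly the claimed bound $\tfrac{4\sigma}{\gamma}\big(1+\tfrac{k}{\tau}\|\bW+\Delta\bW\|\big)+o(\eta)$; the estimate $\sigma \le (1+\rho(\bVtr))\,\eta$ follows directly from $\|\Delta\bB\|,\|\Delta\bW\| \le \eta$.

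The principal obstacle, and the feature that separates this from a textbook eigenspace perturbation, is the self-consistency of the shift: the matrix $\bM$ whose invariant subspace we follow itself depends on the unknown optimal value $\rho$, so $\bE$ is not built from $\Delta\bB,\Delta\bW$ alone but carries the coupled contribution $(\cont\rho-\rho)\,\bW$. The $f$/$\cont f$ comparison is precisely what converts this implicit dependence into an explicit first-order bound, and—through $-f'(\rho)\ge\tau$—is where the sum $\tau$ of the $k$ smallest eigenvalues of $\bW$, and hence the factor $k/\tau$, enters. A secondary technical point is confirming that the perturbed maximizer $\cont\bVtr$ exists and inherits a spectral gap; this follows from the continuity of eigenvalues for $\eta$ small enough that the gap $\gamma>0$ is not closed.
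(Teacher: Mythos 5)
The paper does not prove Theorem~\ref{thm:pert}: it is imported verbatim from \cite[Thm.~4.1]{zhang2013perturbation}, so there is no in-paper proof to compare yours against. Judged on its own terms, your reconstruction is sound and is the natural architecture for such a bound: identify $\bVtr$ and $\cont\bVtr$ with the top-$k$ invariant subspaces of the shifted matrices $\bB-\rho\,\bW$ and $(\bB+\Delta\bB)-\cont\rho\,(\bW+\Delta\bW)$ via the first-order conditions, control the drift $|\cont\rho-\rho|$ of the shift separately, and finish with a Davis--Kahan-type estimate. Your treatment of the shift is the genuinely valuable part and is correct; it can even be streamlined: $f(s)=\sum_{i=1}^k\lambda_i(\bB-s\bW)$ is a maximum over orthonormal $\bV$ of affine functions of $s$ with slopes $-\tr(\bV^T\bW\bV)\le-\tau$ (Ky Fan), hence convex and decreasing at rate at least $\tau$ everywhere, which replaces your Hellmann--Feynman/mean-value step by one needing no differentiability; combined with the Weyl bound $|f(\cont\rho)|\le k\,\|\Delta\bB-\cont\rho\,\Delta\bW\|$ this yields $|\cont\rho-\rho|\le k\sigma/\tau+o(\eta)$ and hence exactly the factor $1+\tfrac{k}{\tau}\|\bW\|$.

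Two points deserve tightening before this could stand as a proof. First, you invoke a $\sin\theta$ theorem with constant $c=4$ for a gap measured inside the \emph{unperturbed} spectrum; the classical Davis--Kahan statement measures the gap between the spectra of the two different matrices, so you should name the single-matrix-gap variant you rely on (the operator-norm version gives constant $2$, which suffices since the theorem only claims $4$). Second, there is a mild circularity to flag: bounding $\|\Delta\bB-\cont\rho\,\Delta\bW\|$ by $\sigma+o(\eta)$, and keeping $\cont\rho$ inside the interval where the slope bound on $f$ applies, both presuppose $\cont\rho\to\rho$ as $\eta\to0$; this qualitative convergence must be established first, from the continuity of the optimal value of \eqref{eq:TR.MaxProblem} in $(\bB,\bW)$ (compact constraint set, jointly continuous objective since $\bW+\Delta\bW$ stays SPD for small $\eta$), before the quantitative $O(\eta)$ estimate is derived. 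With those two clarifications the argument is complete and delivers the stated bound, including $\sigma\le(1+\rho(\bVtr))\,\eta$, which is immediate from $\|\Delta\bB\|,\|\Delta\bW\|\le\eta$.
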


Note that, as stated in the introduction, we always have $\|\bVtr \bVtrT-\cont\bVtr\cont\bVtrT\| \le 1$. This theorem gives an asymptotic upper bound that is linear in $\eta$, for $\eta \to 0$.

When the contamination level tends to zero, i.e., $\eps\to 0$, we can express $\Delta\bB$ and $\Delta\bW$ as in \eqref{eq:cont-oneclass}. Let us assume that $\cont\bSigma_1 = \bSigma_1$. Then
\[
\|\Delta\bB\| \le 2\, \eps \, p_1 \, \|\bdelta_1\| \, \|\Delta_1\| + o(\eps), \qquad \|\Delta\bW\| \le \, \eps \, p_1^2 \, \|\Delta_1\|^2 + o(\eps).
\]
Thus,
\[
\|[\Delta\bB \ \, \Delta\bW]\| \le \,\eps \, p_1\, \|\Delta_1\|\, (2\,\|\bdelta_1\|\, + \, p_1 \, \|\Delta_1\| ) + o(\eps).
\]
If $p_1$, $\|\bdelta_1\|$, and $\|\Delta_1\|$ are fixed, then we have just shown that $\|[\Delta\bB \ \, \Delta\bW]\| \le \eps\,C$, where $C= p_1\, \|\Delta_1\|\, (2\,\|\bdelta_1\|\, + \, p_1 \, \|\Delta_1\| )>0$. If we set $\eta= \eps\,C$ in Theorem~\ref{thm:pert}, then $o(\eta) = o(\eps)$. Thus we can write the upper bound for the dissimilarity measure between $\bVtr$ and $\cont\bVtr$:
\begin{align*}
\|\bVtr \bVtrT-\cont\bVtr\cont\bVtrT\| & \le \frac{4}{\gamma} \, \eps \, p_1 \, \|\Delta_1\| \, \big(2\,\|\bdelta_1\| + p_1 \, \rho(\bVtr) \, \|\Delta_1\|\big)\\
&\qquad\qquad\cdot \Big(1+\frac{k}{\tau} \, \left(\|\bW\| + 2\, \eps \, p_1^2 \, \|\Delta_1\|^2\right) \Big) + o(\eps)\\
& = \frac{4}{\gamma} \, \eps \, p_1 \, \|\Delta_1\| \, \big(2\,\|\delta_1\| + p_1 \, \rho(\bVtr) \, \|\Delta_1\|\big)\, \Big(1+\frac{k}{\tau} \, \|\bW\|\Big) + o(\eps),
\end{align*}
where in the last step we discard a new $o(\eps)$-term.
Let $\lambda_{p-k+1}(\bW) \ge \cdots \ge \lambda_p(\bW)$ be the $k$ smallest eigenvalues of $\bW$; then $\lambda_1(\bW)=\|\bW\|$ and $\tau=\sum_{i=1}^{k}\lambda_{p-k+i}(\bW)$. Since $\tau \ge k\,\lambda_p(\bW)$, we can write:
\begin{equation}\label{eq:Upperlim_TR}
\|\bVtr \bVtrT-\cont\bVtr\cont\bVtrT\|
 \le \frac{4}{\gamma} \, \eps \, p_1 \, \|\Delta_1\|\left(2\,\|\delta_1\| + p_1 \, \rho(\bVtr) \, \|\Delta_1\|\right)\, \Big(1+ \frac{\lambda_1(\bW)}{\lambda_p(\bW)}\Big) + o(\eps).
\end{equation}
The quotient $\lambda_1(\bW)/\lambda_p(\bW)$ is the condition number of $\bW$, which is an indication of the ill-conditioning of $\bW$. In the situation of a high condition number, even small perturbations in the $\bW$ matrix can lead to big changes in the subspaces generated by TR.

The upper limit \eqref{eq:Upperlim_TR} is linear in the distance between $\bmu_1$ and $\bmu$, i.e., $\|\bdelta_1\|$, but quadratic in $\|\Delta_1\|$, the distance between the non-contaminated mean vector and the contaminated mean vector. Thus $\|\Delta_1\|$, which is related to the contamination process, is the term that contributes the most to increasing the upper limit of this dissimilarity index. Moreover, the upper limit depends explicitly on the theoretical within covariance matrix, $\bW$, but only implicitly on $\bB$ (via $\rho(\bVtr)$ and $\gamma$), assigning different levels of importance to these two matrices.


A way to control the impact of contamination in the estimated TR and FDA coefficients is to consider robust estimators, which are discussed in the next section.

\section{Robust trace ratio and robust Fisher discriminant analysis }\label{sec:rob}

In this section, we propose a robust version of the TR algorithm, and also study a robust version of FDA. 
Instead of considering the classical estimators of the between and within covariance matrices defined in \eqref{eq:SbSw}, we use their robust versions based on the minimum covariance determinant (MCD) estimators of location and covariance matrix, described in \cite{rousseeuw1999fast}. Let us denote the robust estimators by $\wt{\bmu}_j$ and $\wt{\bS}_j$ for the location and the covariance matrix, respectively, of the $j$th group, $j = 1, \dots, g$. The global location estimator is defined as $\wt{\bmu} = \sum_{j=1}^g \tfrac{n_j}{n} \, \wt{\bmu}_j$. The robust versions of the between and within scatter matrices are then defined as
\begin{equation}\label{eq:sbsw-rob}
\wt{\bB} = \sum_{j=1}^g \tfrac{n_j}{n} \, (\wt{\bmu}-\wt{\bmu}_j) \, (\wt{\bmu}-\wt{\bmu}_j)^T, \qquad \wt{\bW} = \sum_{j=1}^g \tfrac{n_j-1}{n-g} \, \wt{\bS}_j.
\end{equation}
We now propose a robust version of the trace ratio method, which we denote by rTR. 
For this method, we determine $\wtbVtr$ from Algorithm~1 in Subsection~\ref{Sec:TR optimization}, but now using as input the robust estimates of the between and within scatter matrices, $\wt{\bB}$ and $\wt{\bW}$, respectively.

We compare rTR with a robust version of FDA (rFDA), where a robust coefficient matrix, $\wtbVfda$, is obtained by finding the first $k$ generalized eigenvectors of the pencil $(\wt{\bB}, \wt{\bW})$. 
For the classical estimates, the statistical literature usually recommends the derivation of the eigenvectors of the matrix ${\bW}^{-1}{\bB}$, as can be confirmed in, e.g., \cite{johnson2007applied} or \cite{Hastie2009StatLearning}.
Nevertheless, we note that standard methods working on this pair (such as the ones implemented in the R package {\sf geigen}, see \cite{R.geigen:2019} for details) are numerically preferable over approaches based on the transformed matrix $\bW^{-1} \bB$ (see, e.g., \cite[Ch.~8]{golubvanloan}). Therefore, we use the function {\sf geigen} from \cite{R.geigen:2019} to derive the classical and robust FDA matrix of coefficients. 

The procedure to classify a test set observation, $\bx_0$, into one of the groups, for both rTR and rFDA, is as follows.
\begin{enumerate}
\item[(i)] Project the training set observations on the subspace spanned by $\wt{\bV}$ (estimated by rTR or rFDA).
\item [(ii)] Compute the MCD estimates of the group projected locations $\wt {\bmu}_{Uj}$
and the covariance matrices, and from these the associated $k \times k$ pooled covariance matrix $\wt\bW_{U}$.
\item[(iii)] Compute $\bu_0=\wt{\bV}^T\bx_0$ and the Mahalanobis distance between $\bu_0$ and $\wt {\bmu}_{Uj}$, $\|\bu_0-\wt{\bmu}_{Uj}\|_{\wt\bW_{U}^{-1}}$.
\item[(iv)] Assign $\bx_0$ to the group $j$ that minimizes the quantity $\|\bu_0-\wt{\bmu}_{Uj}\|_{\wt\bW_{U}^{-1}}^2 - 2\, \log (n_j/n)$.
The second term balances the class prior information in the decision rule.
\end{enumerate}
Step (iv) is equivalent to performing robust LDA over the projected data on the reduced subspace, spanned by a robust estimate of $\bV$. The robust LDA is obtained by plugging the MCD estimators of location and covariance in the Mahalanobis distances, computed in the reduced subspace.
This is a robust way to mitigate the potentially harmful effect of outliers in the classification rules based on the Euclidean distance in the projected space. We remark that, in case of high-dimensional data, we may also use regularized MCD \cite{boudt2020mrcd} or any other robust estimators of the groups location and covariance matrix.

In the next sections, we compare FDA and TR when either classical or robust estimates for the covariance matrices are considered.

\section{Simulation study} \label{sec:simulation}
We first explore TR and FDA as classifiers. Here the dimensionality reduction is embedded in the classifier: first, we build the projection matrix, either $\bVfda$ or $\bVtr$, and then we classify the projected data points based on the distances from the projected group means. 
The simulation scheme presented in this section has two main objectives: (i) compare TR with FDA classical and robust classifiers, under clean and contaminated data, and (ii) analyze the performance of TR and FDA classifiers under the presence of irrelevant variables for the associated classification problem. 

\subsection{Scenarios}
\label{sec:scenarios}
In all scenarios, we consider four groups ($g=4$), each one characterized by a random vector of dimension $p=3$, except for the last scenario, where we continue to have three relevant variables for the classification problem, but we add $q \in \{0,10,20,50,100\}$ other irrelevant variables. 
The contamination scheme is the one described in Section~\ref{subsec:constscatter}. 
More precisely, in the $\ell$th scenario, the clean $j$th group is $\bX_j = (\bX\mid Y = j) \sim \caln_p\big(\bmu_j^{(\ell)} \, , \, \bSigma_j^{(\ell)}\big)$ and the contaminating variable is distributed as $\cont\bX_j \sim \caln_p\big({}^{c}\bmu_j^{(\ell)} \, , \, \bSigma_j^{(\ell)}\big)$, for $j=1, \dots,g$, $\ell = \scen{I}, \dots, \scen{IV}$. 
Contamination is absent when $\eps=0$.

For all scenarios, we choose the group mean vectors to suggest that two discriminant variables are enough to separate the groups, thus we choose $k=2$ as the reduced dimension. To make the scenarios more realistic, the covariance matrices challenge this separation.

As a benchmark for each scenario, we use the associated parameters of the regular population ($\bmu_j^{(\ell)}$ and $\bSigma_j^{(\ell)}$) to write the theoretical between and within scatter matrices. Using these as input, we find the solutions to \eqref{eq:gep} and \eqref{eq:TR.MaxProblem}, and refer to them as theoretical matrices of coefficients, which are indicated by ${\bf V}_{\rm FDA}$ and ${\bf V}_{\rm TR}$, respectively. 

\begin{table}
\small
\caption{Parameters characterizing regular and contaminated distributions for each simulation scenario ($\ell = \scen{I}, \dots, \scen{IV}$), where $\bM^{(\ell)}= [\bmu_1^{(\ell)}, \dots, \bmu_4^{(\ell)}]$, $^c\bM^{(\ell)}= [{}^c\bmu_1^{(\ell)}, \dots, {}^c\bmu_4^{(\ell)}]$, and $\bSigma_j^{(\ell)}$ is the covariance matrix of the $j$th group in the $\ell$th scenario. }
\label{tab:scheme}
\begin{tabular}{ccccccc}
\toprule
$\ell$ & $\bM^{(\ell)}$ & ${}^c\bM^{(\ell)}$ & $\bSigma_1^{(\ell)}$ & $\bSigma_2^{(\ell)}$ & $\bSigma_3^{(\ell)} $& $\bSigma_4^{(\ell)}$ \\ 
\midrule\\
\addlinespace[-2ex]
\scen{I} & $\smtxa{rrrr}{
 15 & 15 & & \\
 3 & -3 & & \\
 & & 2 & -2} $ &
$\smtxa{rrrr}{
 15 & 15 & & \\
 -27 & -3 & & \\
 & & 2 & -2} $ &
$\smtxa{lll}{1 & & \\
 & \ph-1 & -0.25 \\
 & -0.25 & \ph-1} $ &
$\bSigma_{1}^{(\scen{I})}$ &
$\bSigma_{1}^{(\scen{I})}$ &
$\bSigma_{1}^{(\scen{I})}$ \\
\addlinespace[0.5ex]
\midrule\\
\addlinespace[-2ex]
\scen{II} & $\smtxa{rrrr}{
 & & 3 & -3 \\
 -3 & 3 & & \\
 & & 1 & 1}$ &
$\smtxa{rrrr}{
 & & 3 & -3 \\
 -3 & -27 & & \\
 & & 1 & 1}$ &
$\smtxa{rrrr}{
 1 & & \\
 & 3 & \\
 & & 1 } $&
$\bSigma_{1}^{(\scen{II})}$ &
$\smtxa{rrrr}{
 3 & & \\
 & 1 & \\
 & & 3} $&
$\bSigma_{3}^{(\scen{II})}$ \\
\addlinespace[0.5ex]
\midrule\\
\addlinespace[-2ex]
\scen{III} & $\smtxa{rrrr}{ & 10 & & -10 \\
-3 & & 3 & \\
 1& &1 & }$ &
$\smtxa{rrrr}{ & 10 & & -10 \\
-3 & & -27 & \\
 1& &1 & }$ &
$\bSigma_{1}^{(\scen{II})}$ &
$\bI_3$ &
$\smtxa{lll}{1 & & \\
 & \ph-3 & -0.5 \\
 & -0.5 & \ph-1} $&
$\bI_3$ \\
\addlinespace[0.5ex]
\midrule\\
\addlinespace[-2ex]
\scen{IV} & $ \smtxa{c}{
 \bM^{(\scen{II})} \\
 \zero_{q\times 4}} $ & 
$ \smtxa{c}{
 {}^c\bM^{(\scen{II})} \\
 \zero_{q\times 4}} $&
$\smtxa{rrrr}{\bSigma_{1}^{(\scen{II})} & \zero_{3\times q} \\
 \zero_{q\times 3} & \bI_q} $& 
$\bSigma_{1}^{(\scen{IV})}$ &
$\smtxa{rrrr}{\bSigma_{3}^{(\scen{II})} & \zero_{3\times q} \\
\zero_{q\times 3} & \bI_q} $&
$\bSigma_{3}^{(\scen{IV})}$ \\
\addlinespace[1ex]
\bottomrule
\end{tabular}
{\footnotesize \textbf{Note:} $\zero_{q\times q'}$ represents a $q \times q'$ matrix of zeros. In scenario \scen{IV}, $q \in \{0,\,10,\,20,\,50,\,100\}$ represents the number of irrelevant variables. Omitted matrix entries represent zeros.}
\end{table}

\begin{figure}[!ht]
\begin{subfigure} {0.5 \textwidth} %
\includegraphics[width = 0.93\textwidth]{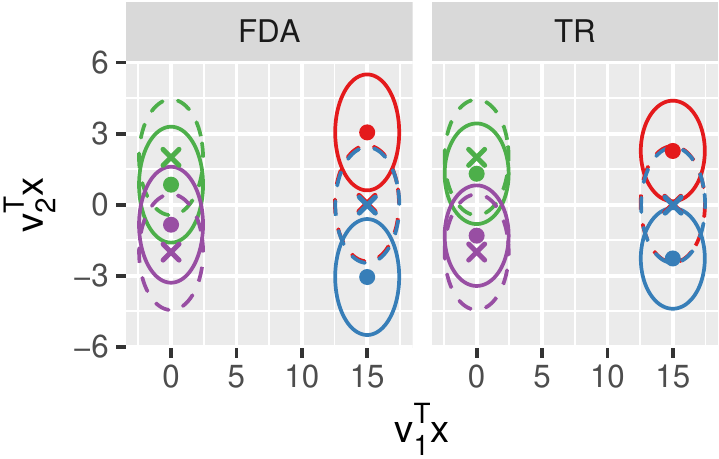} 
 \caption{Scenario \scen{I}.}
 \label{fig:Ellipse_I}
\end{subfigure} 
\begin{subfigure}{0.5 \textwidth}%
 \includegraphics[width = 0.93\textwidth]{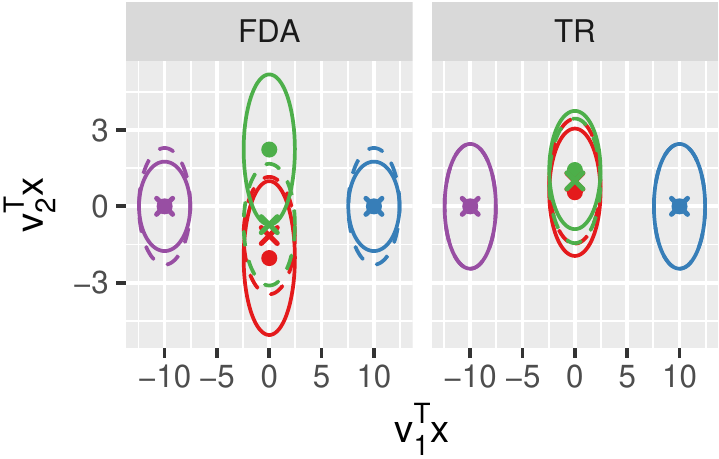}%
\caption{Scenario \scen{III}.}
 \label{fig:Ellipse_III}
\end{subfigure}
\hfill
\begin{subfigure}{\textwidth}%
 \centering
 \includegraphics[width = \textwidth]{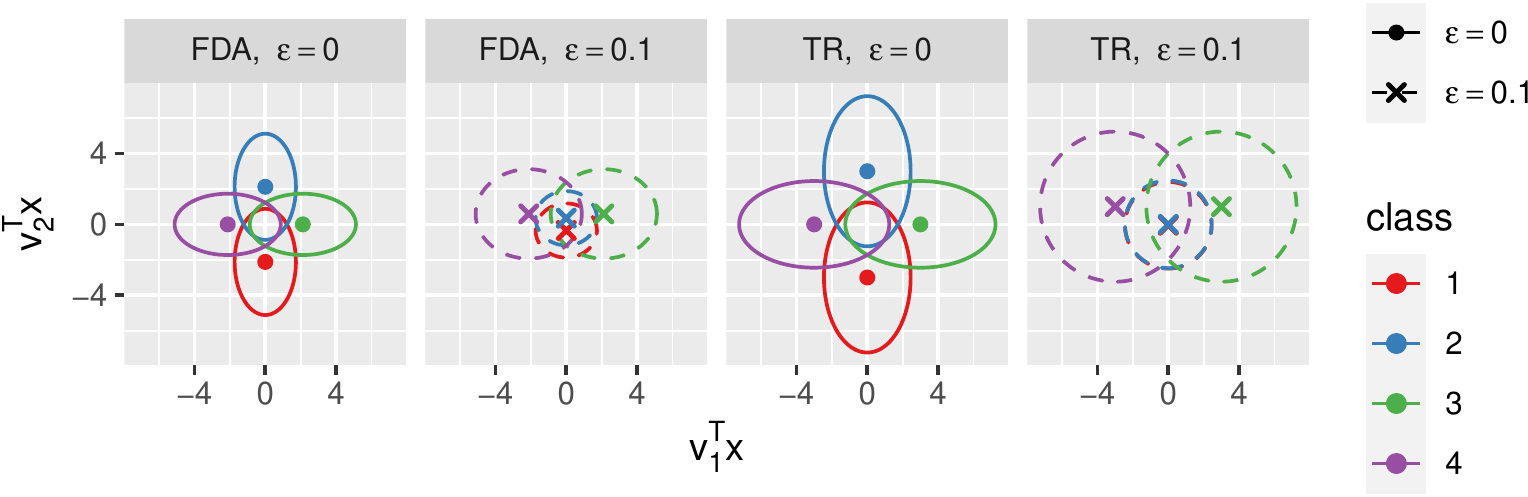}%
 \caption{Scenario \scen{II}.}
 \label{fig:Ellipse_II}
\end{subfigure}
\caption{Mean vectors and 95\% confidence ellipses for the projected data assuming that the projected data follows a bivariate normal distribution. The projection matrices are the theoretical $\bVtr$ and $\bVfda$, corresponding to the contamination levels $\eps = 0$ (solid line) and $\eps = 0.1$ (dashed line).}
 \label{fig:Ellipse_all_scenarios}
\end{figure}

When all group covariance matrices are equal, i.e., in the first scenario ($\ell = \scen{I}$), and the underlying group population follows a multivariate normal distribution, the classification rule that minimizes (maximizes) the total probability of misclassification (probability of correctly assign an observation to a group, called theoretical accuracy) is linear (LDA, cf.~Section~\ref{sec:fda-lda}) and would coincide with FDA, if we considered $k=3$. In the present work, we only consider $k=2$, which does not enjoy of the same optimality property. 
In addition, we see that TR leads to better results than FDA in this scenario, showing that the missing dimension has a relevant impact on the total probability of misclassification. 
The solutions to \eqref{eq:gep} and \eqref{eq:TR.MaxProblem} associated with the parameters characterizing the regular distribution are:
\begin{equation}\label{eq:V_scenario O}
{\bV}^{(\scen{I})}_{\rm FDA} = \smtxa{cl}{
1 & 0 \\
0 & 1 \\
0 & 0.415},	\qquad			
{\bV}^{(\scen{I})}_{\rm TR} = \smtxa{cl}{
1 & 0 \\
0 & 0.757 \\
0 & 0.654}.	
\end{equation}
In this case, the first two FDA discriminant variables are $X_1$ and a linear combination of $X_2$ and $X_3$, where $X_2$ has a higher weight than $X_3$.
The TR matrix of coefficients has a similar structure, but gives a more balanced weight to $X_2$ and $X_3$.
It should be noted that these weights cannot be compared one-to-one because the classification is based on a different metric: while the Mahalanobis distance is used for FDA, the Euclidean norm is exploited for classification based on the trace ratio method.
    %
    %
    
To get a better understanding, we visualize the effect of the contamination on the projected mean vector and covariance matrix in each scenario. In Figure~\ref{fig:Ellipse_all_scenarios} we draw 95\% confidence ellipses for the projected data, assuming that the projected data follows a bivariate normal distribution with mean vector and covariance matrix equal to the theoretical values under each scenario. We consider $\eps=0$ and $\eps=0.10$; in the latter case, the projected contaminated group does not follow a bivariate normal distribution, 
so the 95\% confidence ellipse would not be an appropriate confidence region. Nevertheless, this approximation serves its purpose. 

For scenario $\ell = \scen{I}$, the contamination strongly affects the first and second group mean vectors, leading to an overlap between these two groups in the projected space spanned by $\bVfda^{\scen{(I)}}$ and $\bVtr^{\scen{(I)}}$. As a side effect, the separation between the third and fourth group increases for both FDA and TR, as shown in Figure~\ref{fig:Ellipse_I}. 

In the following scenarios, we consider different class covariance matrices. 
The second scenario ($\ell = \scen{II}$) describes a case where TR and FDA show a similar performance. The covariance matrices are all diagonal (see Table~\ref{tab:scheme} for more details). The theoretical matrices of coefficients are \begin{equation}\label{eq:V_scenario I}
{\bV}^{(\scen{II})}_{\rm FDA} ={\bV}^{(\scen{II})}_{\rm TR} = \smtxa{cc}{
1 & 0 \\
0 & 1 \\
0 & 0 \\
}.	
\end{equation}
As illustrated in Figure~\ref{fig:Ellipse_II}, in the projected space we cannot distinguish between groups 1 and 2, but the contamination also has a considerable impact on the other two groups, mainly on the shapes of the covariance matrices, as illustrated by the changes in the 95\% confidence ellipses. 

In the third scenario ($\ell = \scen{III}$), we illustrate an example where FDA leads to better results than TR. Like in the previous scenario, by looking at the group means (cf.~Table~\ref{tab:scheme}), $X_1$ and $X_2$ seem to be enough to separate the groups. 
However, the correlation between $X_2$ and $X_3$ in the third group affects $\bv_2$, where both variables have a nonzero weight. We can anticipate that FDA performs better than TR since it gives higher weight to $X_2$ than to $X_3$, as follows: 
\begin{equation}\label{eq:V_scenario II}
{\bV}^{(\scen{III})}_{\rm FDA} = \smtxa{cl}{
1 & 0 \\
0 & 1 \\
0 & 0.125},	\qquad			
{\bV}^{(\scen{III})}_{\rm TR} = \smtxa{cl}{
1 & 0 \\
0 & 0.147\\
0 & 0.989}.	
\end{equation}
This is reinforced by Figure~\ref{fig:Ellipse_III}, where the 95\% confidence region of the first and third group show a high overlap in the subspace spanned by TR, when no contamination is present. Contamination increases this overlap but does not seem to affect groups 2 and 4. For FDA, we observe a considerable separation between groups 1 and 3 in the projected space, when no contamination occurs. 
The contamination scheme substantially increases the overlap between groups 1 and 3 in the projected space, as can be confirmed by Figure~\ref{fig:Ellipse_III}. Additionally, contamination affects the variability of groups 2 and 4. 


In the fourth scenario ($\ell = \scen{IV}$) we test the performances of FDA and TR in presence of irrelevant variables in the data. 
In \cite{ortner2020robust} a simulation scheme is designed to study the effect of increasing the number of irrelevant variables for the classification problem, when keeping the number of relevant variables and the group sample sizes fixed. The authors have shown that, for clean and contaminated data, the accuracy of both classical and robust linear discriminant classifiers decreases considerably, as the number of irrelevant variables increases. To verify whether this behavior also occurs in TR and FDA, we design the fourth scenario ($\ell = \scen{IV}$) by adding $q\in\{0,10,20,50,100\}$ irrelevant variables to the second scenario. We choose scenario \scen{II} as starting point, since it is the one with no significant differences in the median accuracy between theoretical TR and FDA (cf.~Section~\ref{sec:simulexp}).
The irrelevant variables are independent and identically distributed with zero mean and unitary variance; they are also uncorrelated from the potentially relevant variables, i.e., $X_1$, $X_2$, and $X_3$. The parameters associated with the fourth scenario are listed in Table~\ref{tab:scheme}. From the theoretical viewpoint, the relevant variables are well captured by the matrices of coefficients: 
\begin{equation}\label{eq:V_scenario IV}
{\bV}^{(\scen{IV})}_{\rm FDA} ={\bV}^{(\scen{IV})}_{\rm TR} = \smtxa{cc}{
1 & 0 \\
0 & 1 \\
0 & 0 \\[-1mm]
\vdots & \vdots\\
0 & 0 \\
}.	
\end{equation}	
In the following section, we are going to generate data from the scenarios in Table~\ref{tab:scheme} to study the sample behavior of $\bVtr$ and $\bVfda$ under different contamination levels and different estimators for the between and within covariance matrices.

\subsection{Experimental setting}
\label{sec:simulexp}
Given a scenario, for each experiment we generate a training set of $n_j =400$ data points for each class, and a test set composed by $n_j=40$ observations per group, according to the parameters defined in Table~\ref{tab:scheme}. For scenario \scen{IV} we also consider $n_j = 40$ for the training set. Training sets and test sets are independently generated. We study the different contamination levels $\eps \in \{0,\, 0.05,\, 0.10,\, 0.15,\, 0.20,\, 0.25,\, 0.30 \}$. 
We test the performances of the following classifiers.
\begin{itemize}
\item cTR and cFDA: TR and FDA with classical estimators for the covariance matrices. The associated classification rules are described in Section~\ref{sec:ldatr}.
\item rTR and rFDA: TR and FDA with robust estimators for the covariance matrices, as described in Section~\ref{sec:rob}. These are obtained by the MRCD robust location and covariance matrix estimators via the {\sf CovMrcd} routine from the R package {\sf rrcov}, where the parameter ${\sf alpha}=0.75$ controls the size of the subsets over which the covariance determinant is minimized (see \citep{Hubert.VanDriessen:2004,Todorov.rrcov:2009} for further details). The classification rules are described in Section~\ref{sec:rob}. Given that the MRCD estimates select $75\%$ of the data, we expect that they are able to deal with datasets with at most $25\%$ of outliers. Finally, we remark that the regularization parameter of the MRCD estimates is chosen adaptively, so that MRCD estimates coincide with MCD estimates when regularization is not needed \cite{boudt2020mrcd}.
\item tTR and tFDA: theoretical TR and FDA, where the covariance matrices are computed analytically (cf.~\eqref{eq:Var(X)}). In this case, no training set is needed. The associated classification rules are described in Section~\ref{sec:ldatr} and make use of the theoretical group means.
\item tQDA: quadratic discriminant classification rule (see, e.g., \cite{Hastie2009StatLearning}) for multivariate normal data. The parameters of the distributions are known, so we do not need a training set.
\item AdaBoost: nonlinear classifier (with decision trees as the weak learners) \citep{Freund.Schapire:1997}. This is abbreviated by {\sf ab} in the figures. We have selected AdaBoost due to its popularity (and proven performance) in the Machine Learning community.
\end{itemize}
We remark that tQDA and AdaBoost do not involve any linear dimensionality reduction strategy. When the groups have a multivariate normal distribution with different covariance matrices, tQDA is the optimal classifier, because it maximizes the accuracy (or, equivalently, it minimizes the total probability of misclassification). In scenario $\scen{I}$, the covariance matrices are all equal, so QDA coincides with LDA, which is the optimal linear classifier.

In cTR, cFDA, rTR, and rFDA, the training set is used to find a projection matrix for each method, while we assess the performances of the classification rules on the test set. Each experiment is repeated $200$ times, meaning that for each contamination level $\eps$, we generate $200$ training sets and $200$ test sets. To get trustful performance measures, the test set is never contaminated.

For each algorithm and scenario, we report the sample accuracy.
We also evaluate the goodness of a sample subspace $\wh\bV$ compared to its theoretical counterpart $\bV$. To do so, we compute the angles between these two subspaces $\angle (\bV, \wh\bV)$. The matrices $\bV$ and $\wh\bV$ need to have orthogonal columns: these can be obtained from the solution to FDA via a QR decomposition. The solution to TR already has orthogonal columns. Angles compare only subspaces: they indicate the quality of the estimates of the theoretical coefficients obtained with FDA and TR.

\subsection{Results and discussion}\label{SubSec:Discussion}
We now report the results of the simulation study. Table~\ref{tab:summary} summarizes the ranking of the algorithms based on the median accuracy over the generated test datasets. Relative positions in Table~\ref{tab:summary} are tested via the Wilcoxon signed-rank test for paired samples. Except for AdaBoost, the comparison is made only under the hypothesis that the
mean vectors and covariance matrices
are known and not contaminated. As we will see, the simulation study agrees with the considerations that we have made in Section~\ref{sec:scenarios}, when no contamination occurs: in the first scenario, tTR shows a better accuracy than tFDA; in the second and fourth scenario, the two methods are equivalent; in the third scenario, tFDA gives better results than tTR.
\begin{table}[ht!]
\centering\small
\caption{Ranking of theoretical methods and AdaBoost, with $\eps = 0$, according to the conclusions of the Wilcoxon signed-rank tests for paired samples, with the False Discovery Rate correction \citep{Benavoli.et.al:2016}. The median accuracy is reported in brackets.}
\begin{tabular}{lrrrrrrr}
\toprule
\scen{I}& tQDA {\footnotesize (0.994)}& $>$ & AdaBoost \footnotesize{(0.988)} & $>$ & tTR \footnotesize{(0.969)} &$>$ & tFDA \footnotesize{(0.900)}\\
\scen{II, \, IV}& tQDA \footnotesize{(0.894)}& $>$ & tFDA \footnotesize{(0.881)} & $=$ & tTR \footnotesize{(0.881)} &$>$ & AdaBoost \footnotesize{(0.875)}\\
\scen{III}& tQDA \footnotesize{(0.981)}& $=$ & tFDA \footnotesize{(0.981)} & $>$ & AdaBoost \footnotesize{(0.975)} &$>$ & tTR \footnotesize{(0.838)} \\
\bottomrule
\end{tabular}
\label{tab:summary}
\end{table}

Figure~\ref{fig:angles-samples} shows the angles between the benchmark matrices $\bVtr$ or $\bVfda$ and the solution to TR or FDA when $\bB$ and $\bW$ are estimated from a sample. We consider both classical and robust estimates. Since we are able to compute the theoretical contaminated matrices $\cont \bB$ and $\cont \bW$ from Section~\ref{sec:Contam effect on W and B}, we can show the theoretical angles between the original subspaces and the contaminated ones (represented in gray in Figure~\ref{fig:angles-samples}). 
\begin{figure}[ht]
\centering
\includegraphics[width = \textwidth]{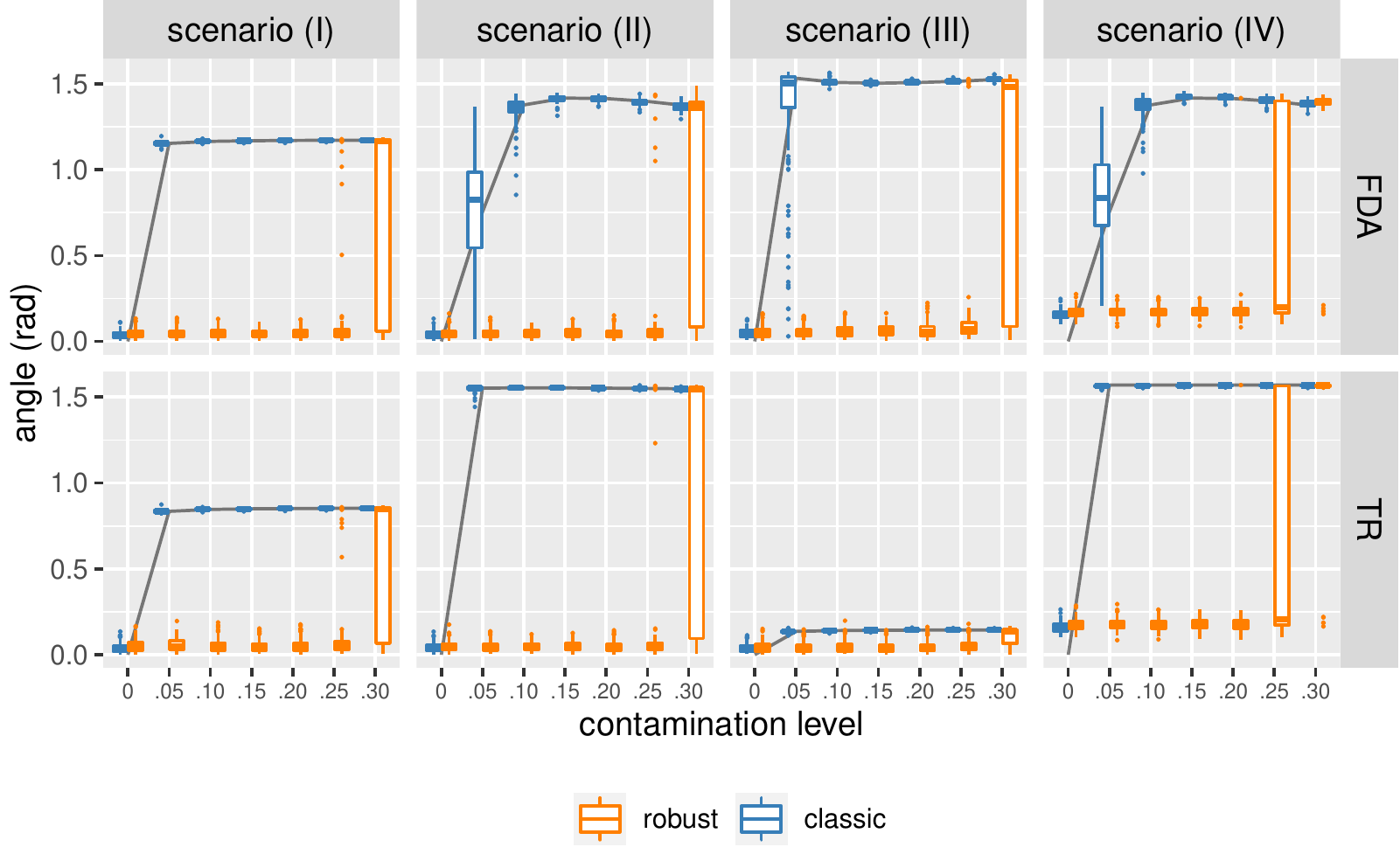}
\caption{Angle between the solution to TR (FDA) when the $\bB$ and $\bW$ are the theoretical ones, and the solution to TR (FDA) in presence of contamination when $\bB$ and $\bW$ are estimated from a sample. We consider both classical (blue) and robust (orange) estimates. The gray lines correspond to the theoretical angle between the original subspace with and without contamination. The horizontal axis shows different levels of contamination in the sample. For scenario $\scen{IV}$, we only consider $q = 10$.}
\label{fig:angles-samples}
\end{figure}

Figures \ref{fig:Boxplots_I} to \ref{fig:Boxplots_III} report the boxplots of the accuracy for each method and level of contamination. We indicate as benchmark results those related to tTR, tLDA, and tQDA, since these are not affected by the contamination level. Given the theoretical findings of the contaminated scatter matrices, derived in Section~\ref{subsec:constscatter}, it is possible to obtain the accuracy for the theoretical tFDA and tTR for each level of contamination. The results based on the theoretical derivations and solely on simulations are quite similar, so only cFDA and cTR (simulation-based) are shown.

In the first scenario ($\ell = \scen{I}$), and when no contamination is considered, the Friedman test \citep{Demsar:2006} concludes that there are statistically significant differences between tQDA, AdaBoost, tTR, and tFDA. 
Moreover, Table \ref{tab:summary} reports the following ranking of the methods, based on Wilcoxon signed-rank test for the median accuracy: tQDA, AdaBoost, tTR, and tFDA.
This ranking is also supported by the plots in Figure~\ref{fig:Boxplots_I}.
In this case, it is known that tQDA reduces to LDA, which would coincide with FDA if we considered $k=3$ instead of $k=2$. FDA with $k=2$ loses almost 10\% in median accuracy by not considering a third dimension on the projected space, while tTR leads to higher median accuracy than tFDA, when $k=2$. In addition, AdaBoost turns out to be the second best method.
Under contamination, we observe that the robust methods can accommodate until 25\% of contamination while the classical counterparts break down, presenting an increase in variability. 
From Figure~\ref{fig:angles-samples}, we see that the subspaces obtained with cFDA and cTR change as soon as $\eps \ge 0.05$; cFDA changes slightly more than cTR. The subspaces generated by rFDA and rTR stay close to their theoretical counterparts, as expected, although the robust methods also start breaking down at $\eps = 0.25$.
\begin{figure}[htb!]
\centering
\includegraphics[width=\linewidth]{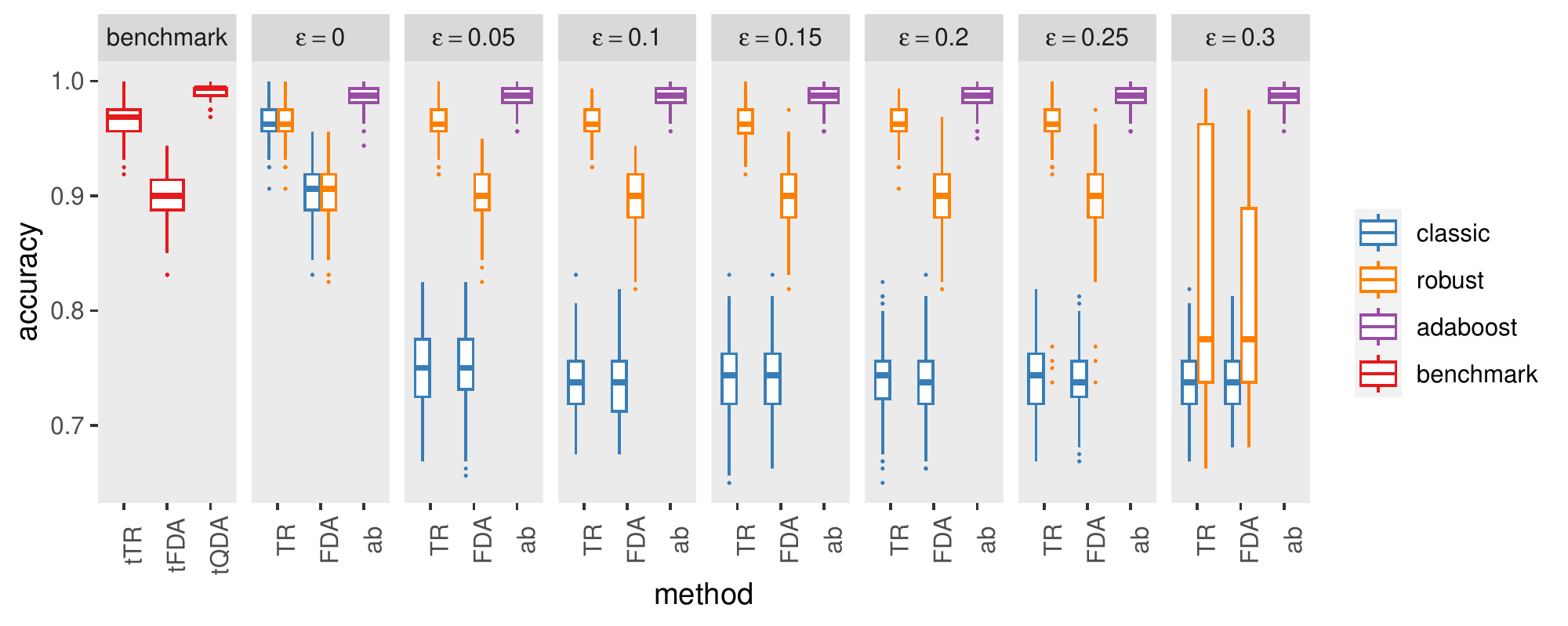}
\caption{
Distribution of the accuracy over 200 simulation runs of Scenario \scen{I}. The accuracy of tTR, tFDA, and tQDA is reported as benchmark in the leftmost plot. Classical/robust TR and FDA classifiers, and AdaBoost, are considered under several levels of contamination.}
\label{fig:Boxplots_I}
\end{figure}

In the second scenario ($\ell = \scen{II}$), when $\eps = 0$, the Friedman tests followed by the Wilcoxon signed-rank test (in Table \ref{tab:summary}) lead to the following order of the methods: tQDA, tFDA and tTR with (statistically) the same median accuracy, AdaBoost;
this is agrees with Figure~\ref{fig:Boxplots_II}. 
We stress that this is an example where linear approximations (tTR and tFDA) of the optimal quadratic rule lead to better results than a nonlinear classification algorithm (AdaBoost). When the data is not contaminated, classical and robust linear estimators perform similarly to their theoretical counterparts, and better than AdaBoost. When we increase the proportion of contamination, Figure~\ref{fig:Boxplots_II} shows that cTR is immediately affected by the contamination, while the accuracy of cFDA gradually decreases, but ends in the lowest median accuracy for $\eps = 0.30$, compared to all other methods. Both robust alternatives are unaffected by small to moderate percentages of contamination, but break down for $\eps=0.30$, as expected. AdaBoost does not seem to be affected by the contamination. 
In Figure~\ref{fig:angles-samples}, the subspaces obtained with cFDA and cTR change as soon as $\eps \ge 0.05$; cTR changes more than cFDA, which also shows more variability than cTR for $\eps = 0.05,\,0.1$. As expected, the subspaces generated by rFDA and rTR are close to their theoretical counterparts; we observe their breakdown starting from $\eps = 0.25$.
\begin{figure}[htb!]
\centering
\includegraphics[width=\linewidth]{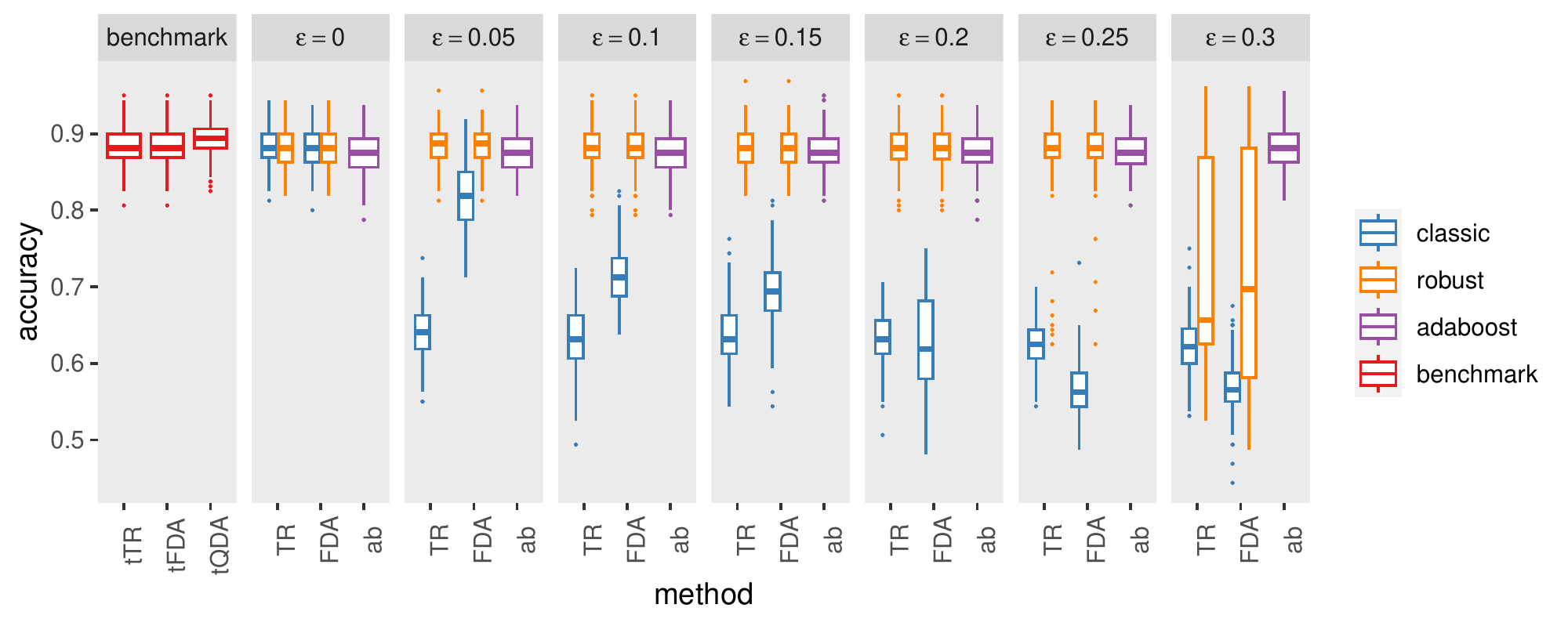}
\caption{Distribution of the accuracy over 200 simulation runs of Scenario \scen{II}. The accuracy of tTR, tFDA, and tQDA is reported as benchmark in the leftmost plot. Classical/robust TR and FDA classifiers, and AdaBoost, are considered under several levels of contamination.}
\label{fig:Boxplots_II}
\end{figure}

The third scenario ($\ell = \scen{III}$) describes a situation where there is no statistically significant 
difference between tQDA and tFDA, both having a higher median accuracy than AdaBoost; tTR is the method with the lowest median accuracy. 
This is confirmed by the Friedman test followed by the Wilcoxon signed-rank tests, and illustrated in Figure~\ref{fig:Boxplots_III}. The cTR method is highly affected by the presence of contamination, showing a lower accuracy, while the performance degradation of cFDA is slower with the increase of $\eps$. 
The increasing contamination level also causes an immediate increase in the angle between the subspace generated by cFDA and its theoretical counterpart. This does not hold for cTR, where the contamination causes only a little change in the angle. We find the same behavior of cTR in Figure~\ref{fig:Boxplots_III}, where even with clean data the accuracy is already quite low, and the contamination slightly worsens the performances of the classifier. Again the subspaces of rFDA and rTR are close to the theoretical ones, except for the contamination level $\eps = 0.3$.
\begin{figure} [htb!]
\centering
\includegraphics[width=\linewidth]{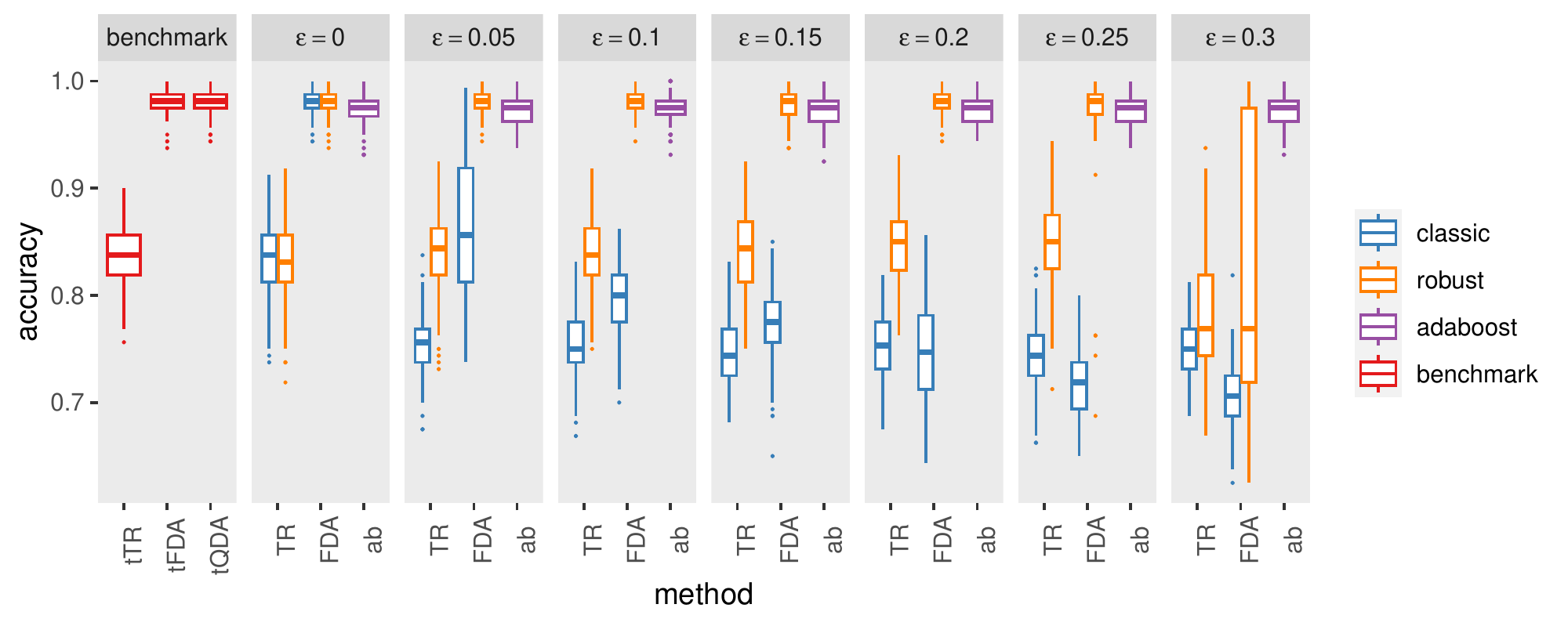}
\caption{Distribution of the accuracy over 200 simulation runs of Scenario \scen{III}. The accuracy of tTR, tFDA, and tQDA is reported as benchmark in the leftmost plot. Classical/robust TR and FDA classifiers, and AdaBoost, are considered under several levels of contamination.}
\label{fig:Boxplots_III}
\end{figure}
The fourth scenario ($\ell = \scen{IV}$) combines the distribution of scenario \scen{II} with $q\in \{0,10,20,50,100\}$ additional variables, which are irrelevant for the classification problem.
Under this scenario, in the training set we consider two group sample sizes $n_j \in \{40,\, 400\}$, clean data ($\eps=0$) and 10\% of contamination ($\eps=0.10$). We compare only cFDA, cTR, rFDA, and rTR for different numbers of irrelevant variables. The median accuracy is shown in Figure~\ref{fig:sc5_400} for $n_j = 400$ and Figure~\ref{fig:sc5_40} for $n_j = 40$. In the latter case, when $p > n_j$, the MRCD estimators are needed. 

When $n_j = 400$, Figure~\ref{fig:sc5_400} shows that the number of irrelevant variables has a small impact on the accuracy, if the data is clean ($\eps=0$). As expected, the contamination affects mainly cFDA and cTR; cFDA median accuracy seems to decrease faster than the one of cTR, but for every $q$ the median accuracy of cTR is lower than the one of cFDA.
\begin{figure}[ht!]
\centering
\includegraphics[width=.90\linewidth]{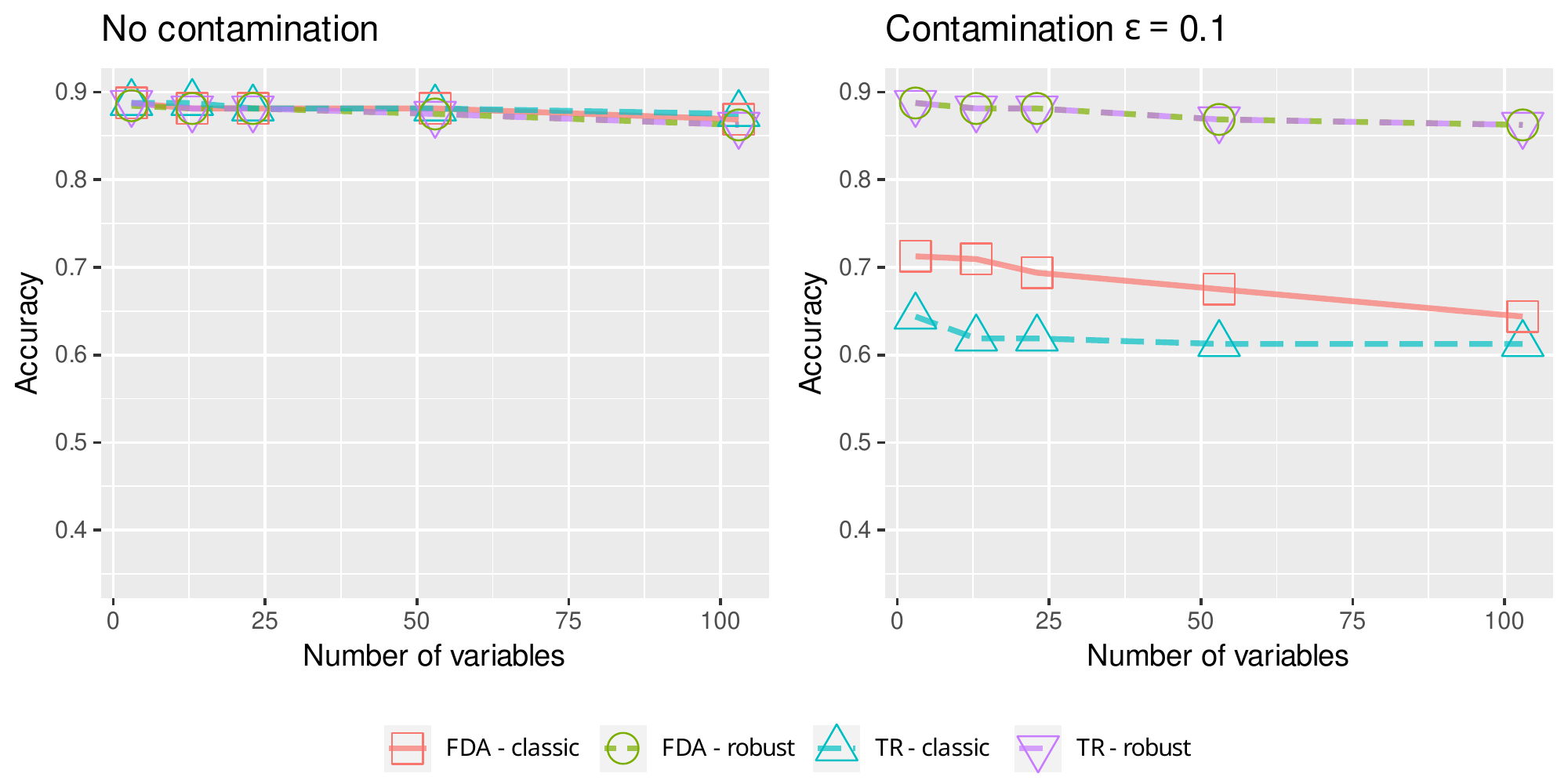} 
 \caption{Median accuracy of 200 simulations in Scenario \scen{IV}, for different numbers of irrelevant variables ($q$), with $n_j=400$.}
 \label{fig:sc5_400}
\end{figure}

\begin{figure}[ht!]
\centering
\includegraphics[width=.90\linewidth]{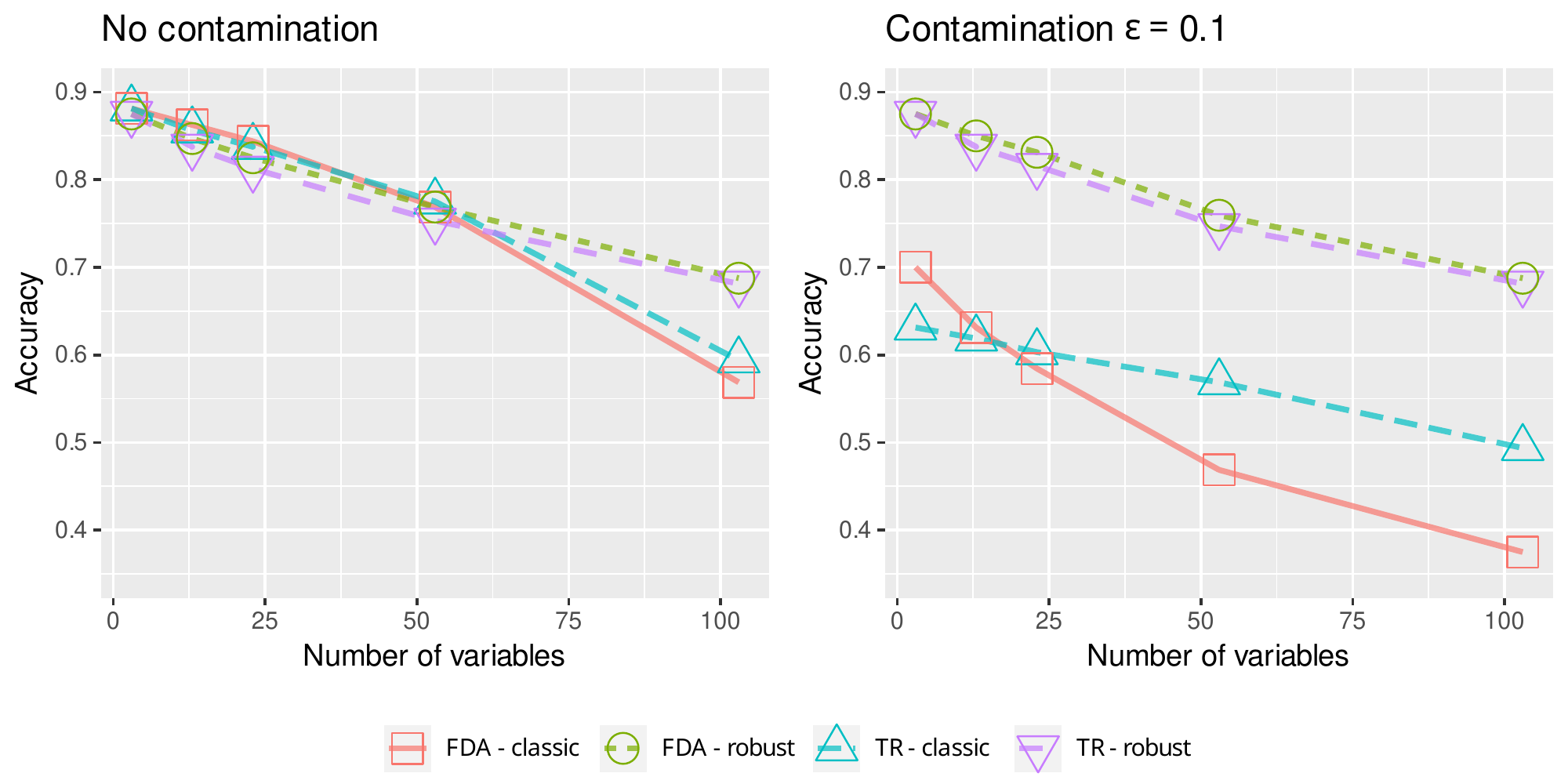} 
 \caption{Median accuracy of 200 simulations in Scenario \scen{IV}, for different numbers of irrelevant variables ($q$), with $n_j=40$.}
 \label{fig:sc5_40}
\end{figure}

The most relevant finding is the impact of the sample size. Even if both tTR and tFDA find the right theoretical matrices of coefficients (see \eqref{eq:V_scenario IV}), smaller sample sizes ($n_j = 40$) lead to poorer classification results when irrelevant variables exist, regardless the presence of contamination. In the uncontaminated case, the median accuracy for cTR (cFDA) is 0.881 (0.888) when $q = 0$, and decreases up to 0.603 (0.569) when $q=100$; in contrast, when $n_j=400$, we go from 0.881 to 0.869 for both cFDA and cTR.
When contamination is present and $n_j=40$, robust estimators achieve a similar accuracy of their classical counterparts, when no contamination is present. For classical estimators we also observe a faster decrease in median accuracy as the number of irrelevant variables increases. 

The pattern for the classical estimators under contamination is similar to the one of Figure~\ref{fig:sc5_400}, but characterized by lower median accuracies. cFDA median accuracy becomes lower than the one observed for cTR when $q\ge 25$, and decreases much faster, meaning that cFDA is clearly more sensitive to the increase of irrelevant variables than cTR. The degradation in the median accuracies of the robust estimators is quite similar to the one registered for the classical counterparts when $\eps = 0$, as expected. 

So far we have discussed three scenarios in which TR and FDA either perform equally well, or show some differences. Interestingly, scenario \scen{III} highlights the idea that a nonlinear and more complicated model is not always better than a linear one in terms of accuracy. As the contamination level increases, in scenarios \scen{II} and \scen{III} the accuracy of cFDA decreases gradually; in all three scenarios, cTR is immediately affected by the presence of contamination but its performance does not deteriorate as the contamination level increases. Even if FDA seems to be more resistant to outliers, it may end up being worst than TR for high contamination levels. Robustification helps both methods up to approximately $25\,\%$ of contamination, by construction, leading to the same accuracies of cFDA and cTR when $\eps = 0$. In scenario \scen{IV}, the increasing number of irrelevant variables affects the accuracy of both TR and FDA, regardless the chosen estimators for the covariance matrices. This behavior is more pronounced when the group sizes are small compared to the number of irrelevant variables. 

In the next section, we present FDA and TR as linear dimensionality reduction strategies. We study their performances on real datasets.

\section{Real datasets}\label{sec:realdata}
When TR and FDA are regarded as dimensionality reduction techniques, the projection step can be viewed as a pre-processing step, and a classifier is built on top of the projected data. We choose rLDA as classifier, but other alternatives (e.g., logistic regression, classification trees, etc.) are also possible. 

\begin{table}[ht]
\centering
\caption{Real datasets, with size ($n\times p$), ranges of group sizes ($n_j$), numbers of groups ($g$), and the considered maximum reduced dimension $r = \min\{p,\, g-1\}$; $p$ represents the number of variables with $Q_n > 0$.} 
\label{tab:datasets}
\begingroup\footnotesize
\begin{tabular}{lllrclc} \toprule
Dataset & Size $n\times p$ & Range $n_j$ & $g$ & $r$ & Complete name & Source \\ \midrule
{\sf ecoli} & $\ph{1}327\times\ph{1}5$ & $[\ph{1}20,\,143]$ & 5 & \ph{1}4 & Ecoli & \cite{ucirepo} \\ 
{\sf floral buds} & $\ph{1}550\times\ph{1}6$ & $[\ph{1}44,\,363]$ & 4 & \ph{1}4 & Floral Buds & \cite{R.classmap:2022} \\ 
{\sf libras} & $\ph{1}360\times90$ & $[\ph{1}24,\,\ph{1}24]$ & 15 & 14 & Movement Libras & \cite{ucirepo} \\ 
{\sf mfeat} & $2000\times64$ & $[200,\,200]$ & 10 & \ph{1}9 & Multiple Features (Karhunen-Love coefs.) & \cite{ucirepo} \\ 
{\sf penguins} & $\ph{1}333\times\ph{1}4$ & $[\ph{1}34,\,\ph{1}73]$ & 6 & \ph{1}3 & Palmer Penguins & \cite{R.palmerpenguins:2020} \\ 
{\sf plant margin} & $1600\times53$ & $[\ph{1}16,\,\ph{1}16]$ & 100 & 53 & One-hundred plant species leaves (margin) & \cite{uciplant} \\ 
{\sf plant shape} & $1600\times64$ & $[\ph{1}16,\,\ph{1}16]$ & 100 & 64 & One-hundred plant species leaves (shape) & \cite{uciplant} \\ 
{\sf plant texture} & $1599\times49$ & $[\ph{1}15,\,\ph{1}16]$ & 100 & 49 & One-hundred plant species leaves (texture) & \cite{uciplant} \\ 
{\sf segment} & $2310\times16$ & $[330,\,330]$ & 7 & \ph{1}6 & Image segmentation & \cite{ucirepo} \\ 
{\sf texture} & $5500\times40$ & $[500,\,500]$ & 11 & 10 & Texture & \cite{keelrepo} \\ 
\bottomrule
\end{tabular}
\endgroup
\end{table}

Experiments are conducted on the real datasets listed in Table~\ref{tab:datasets}. For each dataset, we discard observations containing missing values, and remove those variables for which the $Q_n$ is zero (we recall that $Q_n$ \cite{rousseeuw1993qn} is a high-breakdown point robust estimate of the scale of a random variable). This step aims at removing those variables that are almost constant on the entire dataset. For {\sf penguins}, we combine sex and species to get 6 classes, while we discard the information about the penguins' home islands. In {\sf ecoli}, we do not consider classes with less than 20 observations.

The experiments consist of two steps: first, we reduce the dimension of the problem from $n\times p$ to $n\times k$, for $k = 1, \dots, r$, via either classical or robust TR and FDA. Secondly, we use the projected data to build the robust LDA (rLDA) classifier: this is equivalent to computing the robust Mahalanobis distances between points and means in the $k$-dimensional subspace spanned by the chosen dimensionality reduction method, and assigning each data point to its closest group (we refer to Section~\ref{sec:rob} for more details). 

We compute the accuracy to evaluate the performances of rLDA combined with the various dimensionality reduction strategies. Each dataset is split into a training set, $\bX_{\rm train}$, and a test set, $\bX_{\rm test}$. The training set is used to compute the projection matrix, $\bV$, and to estimate the projected group means and pooled covariance matrix for rLDA. The test set data points are then classified according to rLDA. The median accuracy is estimated via 10-fold cross-validation. We remark that each fold contains the classes in the same proportions as the original dataset. 

When estimating the covariance matrices, we may have the following problems: the (numerical) singularity of $\bW$ estimates, and the singularity of one or more individual group covariance matrices. The first issue may be caused by possibly high correlations among the variables. The nullspace of $\bW$ may intersect the nullspace of $\bB$. In FDA (which is a generalized eigenvalue problem) this fact leads to the so-called singular pencils. In TR we may lose the uniqueness of the solution, because for some reduced dimension $k$ there may be no gap between the $k$th and the $(k+1)$st eigenvalue of $\bB - \rho^{(k)}\bW$. To avoid this situation, we compute the eigenvectors of $\bW$ and project both $\bW$ and $\bB$ onto the subspace spanned by the range of $\bW$, i.e., by the eigenvectors corresponding to its positive eigenvalues. Numerically, we select only the eigenvalues of $\bW$ satisfying $\lambda_i(\bW) > 10^{-10}\,\lambda_1(\bW)$ and the corresponding eigenspace ${\bf \Gamma}$. We solve TR and FDA for the pencil projected onto the span of the eigenvectors
$({\bf \Gamma}^T\bB{\bf \Gamma},\,{\bf \Gamma}^T\bW{\bf \Gamma})$
and thus find a projection matrix $\bV$ with fewer rows. In the end, this is equivalent to work with $\bX_{\rm train}{\bf \Gamma}$ as input of the training task; test data is projected accordingly. We adopt the same strategy for the robust covariance matrices $\wt\bB$ and $\wt\bW$.

The singularity of the individual group covariance matrices can be related to the fact that, for some $j$'s, the group size is smaller than the number of variables $n_j < p$. This is not problematic for the classical estimates $\bS_j$, and, in the worst-case scenario, it results in a singular within covariance matrix, which can be tackled as we have already suggested. As mentioned in Section~\ref{sec:rob}, in the robust case we use the MRCD estimates \cite{boudt2020mrcd} for mean and covariance, in place of the MCD estimates. These prevent the determinant of $\wt\bS_j$ from being zero.

When $n_j \ll p$, the MRCD estimates could give very different results depending on the selected subset of points. Therefore, we have repeated the same experiments with a different regularization strategy for dimensionality reduction. This consists in centering the groups with respect to their $L_1$-median (see, e.g., \cite{fritz2012comparison}) and then computing the MCD estimate of the common covariance matrix. We note that this approach implicitly assumes that all groups share the same covariance matrix. In general, we did not see any improvement compared to the use of MRCD estimates. In some cases, the performance of this other strategy is even less satisfactory.

We mention in passing that we have also compared the rLDA classifier with the classical LDA method. As expected, rLDA results in equal or a better classification performance, except for the {\sf plant shape} set. The uncommon behavior of {\sf plant shape} is also noted when using rLDA and will be discussed later.

In Figure~\ref{fig:realdata}, we report the median accuracy of each method as a function of the reduced dimension $k$. The results are also compared with rLDA where no prior dimensionality reduction is performed. The rLDA median accuracy is also computed using a 10-fold cross-validation strategy. 
\begin{figure}[htb!]
\centering
\includegraphics[width = \textwidth]{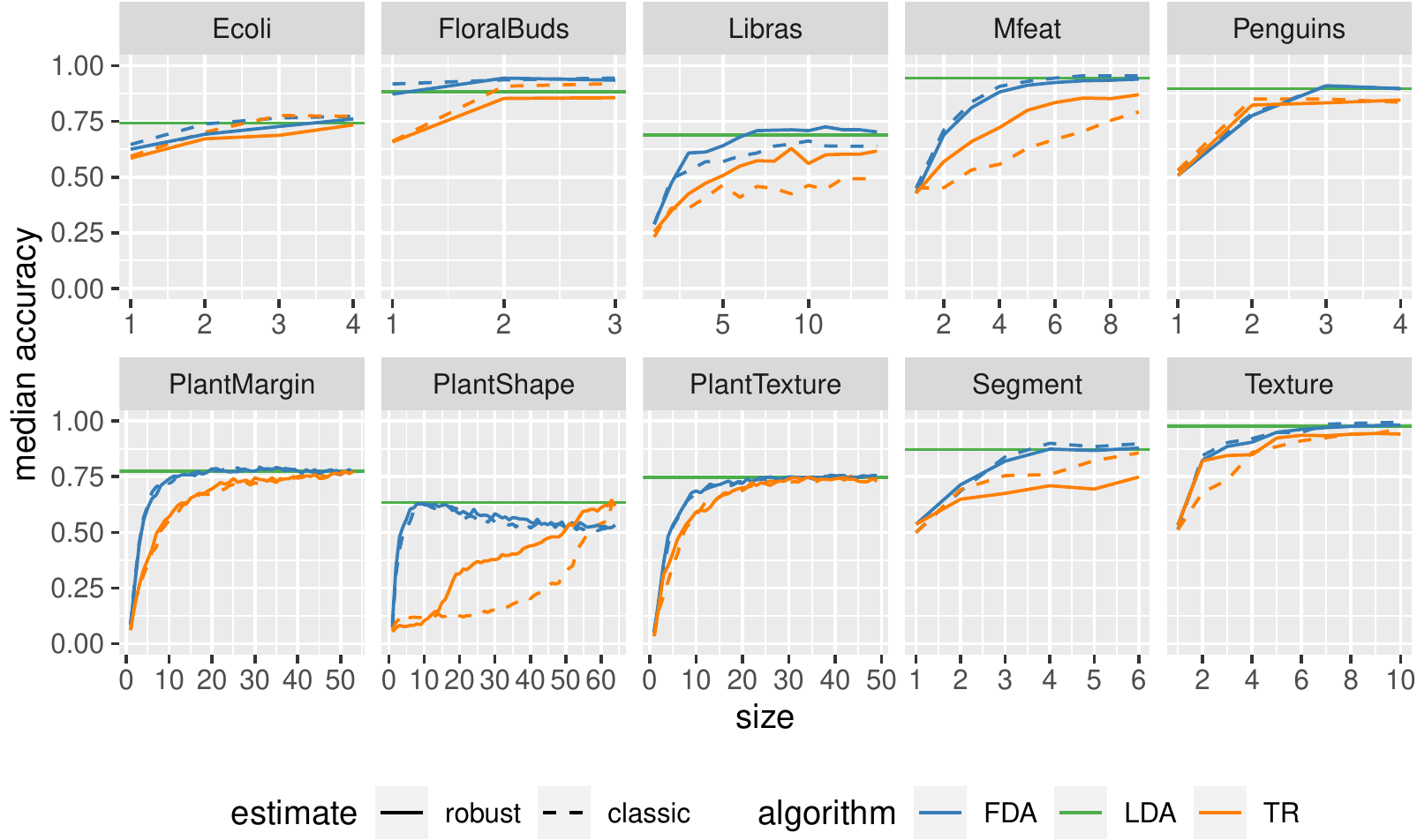}
\caption{Median accuracy for each classification problem, as a function of the reduced dimension $k$. Methods that use robust estimates (including rLDA) are indicated with solid lines; dashed lines are for classical estimates.}
\label{fig:realdata}
\end{figure}

In all problems, FDA with classical or robust estimates reaches the median accuracy of rLDA (on the original data); this means that we can reduce the dimension of the problem without losing  important information for the classification problem. In addition, the lower-dimensional representation of the data enhances interpretability and eases further possible analyses. TR does not always reach the same accuracy, and the increase in the performance looks slower than FDA. In some cases, e.g., {\sf libras}, {\sf mfeat} and {\sf segment}, it seems that a higher dimension than $k = r$ is needed to get to the same accuracy as rLDA and FDA. Finally, there are some situations where FDA and TR give a higher median accuracy than the complete model: cTR and cFDA in {\sf ecoli}; cFDA, rFDA, and cTR in {\sf floral buds}; rFDA in {\sf libras} and cFDA in {\sf segment}.

In general, FDA seems to perform as good as or better than TR, with two small exceptions: {\sf ecoli} for $k \ge 3$, {\sf penguins} for $k = 2$. In the latter case, FDA is still better when $k > 2$. While robustification seems to be of little help for FDA, with the exception of {\sf libras}, we remark that rTR can perform much better than cTR. This is the case of {\sf libras}, {\sf mfeat}, {\sf plant shape}, and {\sf Texture}.

There is an interesting and uncommon behavior in {\sf plant shape}.
For large values of $k$, the median accuracy of rFDA and cFDA decreases, while it keeps increasing for rTR and cTR. One would expect that when the reduced dimension is actually the original size of the problem, i.e., for $k = p$, rFDA coincides with rLDA. This might not be the case for two reasons: $\bVfda\in\mathbb{R}^{p\times p}$ is not an orthogonal basis for $\mathbb{R}^{p}$ and thus it does not preserve distances. Secondly, the MRCD covariance matrix is not affine equivariant \cite{boudt2020mrcd}, meaning that the estimator of the variance of the projected data is not necessarily equal to $\bVfda^T\wt\bW\bVfda$. We also point out that this is the dataset where the largest difference between FDA and TR is observed: while FDA reaches its maximum median accuracy around $k = 10$, TR needs $k = p$ to compete with rLDA.




\section{Conclusions}
\label{sec:conclusions}
We have studied and tested FDA and TR, two linear dimensionality reduction methods that are based on the within and between covariance matrices. We have shown that, under a particular contamination scenario, the two contaminated covariance matrices can be written as the sum of the clean covariance matrices, and a perturbation term that depends on the contamination level of the data. This allowed us to reinterpret the asymptotic perturbation bounds for the solution to TR \cite{zhang2013perturbation} in terms of the parameters of the original variables, the contaminating variables, and the probability of contamination. 

We have introduced a new robust version of TR, by running its algorithm with the (regularized) MCD estimates \cite{boudt2020mrcd} of the between and within covariance matrices. In the simulation study of Section \ref{sec:simulation}, the performance of the method has been compared with its classical counterpart, FDA, QDA, and AdaBoost. In all scenarios, robust estimates have a good impact on both TR and FDA performances. In some cases, FDA shows a higher accuracy than AdaBoost: this reinforces the idea that a nonlinear model does not necessarily have a higher accuracy than a linear one. We have also studied FDA and TR for an increasing number of irrelevant variables, and noticed that the accuracy of the classification decays for both FDA and TR, even when the covariance matrices are estimated robustly. This decay is faster when the group sizes are much smaller than the number of irrelevant variables.

Finally, we have assessed the performance of FDA and TR as dimensionality reduction methods on real datasets, in combination with robust LDA as classifier. In general, FDA performs as well as or better than TR; in most problems, both methods reach the performance of the full model, where the classifier is trained without any dimensionality reduction; in a few cases, they perform even slightly better. For the tested real datasets, the robust estimators have little impact on the accuracy of FDA, while, in general, a big improvement is observed in the TR method. 


\vspace{3mm}
\bmhead{Acknowledgments} This work has received funding from the European Union's Horizon 2020 research and innovation programme under the Marie Sklodowska-Curie grant agreement No 812912. 
It has also received support from Fundação para a Ciência
e Tecnologia, 
Portugal, through the project
UIDB/04621/2020. 

\bibliography{RobustRef}
\end{document}